\newtheorem{thm}{Theorem}[section]
\newtheorem{prop} [thm]{Proposition}
\newtheorem{lem} [thm]{Lemma}
\newtheorem{coro}[thm]{Corollary}
\newtheorem*{thm*}{Theorem}
\theoremstyle{definition}
\newtheorem{defin}[thm]{Definition}
\theoremstyle{remark}
\newtheorem{rem}[thm]{Remark}
\newtheorem{exem}[thm]{Example}
\def\Z{\mathbb{Z}}
\def\N{\mathbb{N}}
\def\A{\mathbb{A}}
\def\OO{\mathcal{O}}
\def\L{\mathcal{L}}
\def\id{\mathrm{id}} 
\def\gm{\mathbb{G}_m} 
\def\gmk#1{\mathbb{G}_{m,#1}} 
\def\uHom{\underline{\mathrm{Hom}}} 
\def\spec#1{\mathrm{Spec}(#1)}
\def\KM{\mathrm{K}^{\mathrm{M}}} 
\def\KMW{\mathrm{K}^{\mathrm{M\hspace{-.2ex}W}}} 
\def\sKM{\mathbf{K}^{\mathrm{M}}} 
\def\sKMW{\mathbf{K}^{\mathrm{M\hspace{-.2ex}W}}} 
\newcommand{\tchi}[2]{%
  \mathchoice{\widetilde{\mathrm{CH}}_{#2}^{\raisebox{-.5ex}{$\scriptstyle#1$}}}
             {\widetilde{\mathrm{CH}}_{#2}^{\raisebox{-.7ex}{$\scriptstyle#1$}}}
             {}
             {}
}
\def\chst#1#2#3#4{\tchi{#1}{#2}(#3,#4)}
\def\GW{\mathrm{GW}} 
\def\H{\mathrm{H}}
\def\HMW{\mathrm{H}_{\mathrm{MW}}}
\def\tZ{\tilde\Z}
\def\tZX#1{{\tilde {\mathrm{c}}}(#1)}
\def\ome#1#2{\omega_{#1/#2}}
\def\sm#1{\mathrm{Sm}_{#1}}
\def\cor#1{\widetilde{\mathrm{Cor}}_{#1}}
\def\Adm{\mathcal{A}}
\def\Tr{\mathrm{Tr}}
\def\Zar{\mathrm{Zar}}
\def\psh#1{\widetilde{\mathrm{PSh}}_{#1}}
\def\Cstar{\mathrm{C}_*^{\mathrm{sing}}}
\begin{document}

\title{A comparison theorem for MW-motivic cohomology}

\author{Baptiste Calm\`es}\email{baptiste.calmes@univ-artois.fr}\address{Laboratoire de math\'ematiques de Lens \\
Facult\'e des sciences Jean Perrin \\
Universit\'e d'Artois\\
Rue Jean Souvraz SP 18\\
62307 Lens Cedex\\
France}
\author{Jean Fasel}\email{jean.fasel@gmail.com}\address{Institut Fourier-UMR 5582 \\ Universit\'e Grenoble-Alpes   \\CS 40700 \\
38058 Grenoble Cedex 9 \\
France}

\thanks{The first author acknowledges the support of the French Agence Nationale de la Recherche (ANR) under reference ANR-12-BL01-0005}

\begin{abstract}
Let $k$ be an infinite perfect field. We prove that $\HMW^{n,n}(\spec L,\Z)=\KMW_n(L)$ for any finitely generated field extension $L/k$ and any $n\in\Z$.
\end{abstract}

\keywords{Finite correspondences, Milnor-Witt $K$-theory, Chow-Witt groups, Motivic cohomology}

\subjclass[2010]{Primary: 11E70, 13D15, 14F42, 19E15, 19G38; Secondary: 11E81, 14A99, 14C35, 19D45}

\maketitle

\pagenumbering{arabic}


\setcounter{tocdepth}{1}
\tableofcontents

\section*{Introduction}

This paper is the fourth of a series of papers (\cite{Calmes14b}, \cite{Deglise16} and \cite{Fasel16b}) devoted to the study of MW-motivic cohomology, which is a generalization of ordinary motivic cohomology. Our main purpose here is to compute the MW-motivic cohomology group of a field in bidegree $(n,n)$, namely the group $\HMW^{n,n}(L,\Z)$. In \cite[Theorem 4.2.3]{Deglise16}, we defined a graded ring homomorphism
\[
\Phi:\sKMW_*\to \bigoplus_{n\in\Z} \mathbf{H}_{\mathrm{MW}}^{n,n}
\]
where the left-hand side is the unramified Milnor-Witt $K$-theory sheaf constructed in \cite[\S 3]{Morel08} and the right-hand side is the Nisnevich sheaf associated to the presheaf $U\mapsto \HMW^{n,n}(U,\Z)$. The homomorphism $\Phi$ is obtained via a morphism of sheaves $\gm^{\wedge n}\to \mathbf{H}_{\mathrm{MW}}^{n,n}$ and the right-hand side has the property to be strictly $\A^1$-invariant \cite[Proposition 1.2.11, Theorem 3.2.9]{Deglise16}. It follows that $\Phi$ is then the universal morphism described in \cite[Theorem 3.37]{Morel08}. In this article, we prove that $\Phi$ is an isomorphism. This can be checked on finitely generated field extensions of the base field $k$ (\cite[Theorem 1.12]{Morel08}) and thus our main theorem takes the following form.

\begin{thm*}
Let $L/k$ be a finitely generated field extension with $\mathrm{char}(k)\neq 2$. Then, the homomorphism of graded rings
\[
\Phi_L:\bigoplus_{n\in\Z} \KMW_n(L)\to \bigoplus_{n\in\Z} \HMW^{n,n}(L,\Z).
\]
is an isomorphism.
\end{thm*}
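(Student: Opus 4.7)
The plan is to adapt the classical Nesterenko--Suslin/Totaro argument (which establishes $\H^{n,n}(\spec L,\Z)\simeq\KM_n(L)$) to the Milnor--Witt setting by constructing an explicit inverse $\Psi_L$ to $\Phi_L$. Bijectivity needs only to be checked at finitely generated field extensions $L$ because both sides are sections of strictly $\A^1$-invariant Nisnevich sheaves, as already observed in the excerpt.

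First, I would make $\HMW^{n,n}(\spec L,\Z)$ concrete. Using the Suslin-type model of MW-motivic cohomology developed in the earlier papers of the series, this group can be identified with the cokernel of
\[
\partial_0-\partial_1 \;:\; \cor{L}\bigl(\aone_L\wedge\gm^{\wedge n},\spec L\bigr) \longrightarrow \cor{L}\bigl(\gm^{\wedge n},\spec L\bigr),
\]
so that its generators are MW-finite cycles supported at closed points $x\in\gm^n_L$, each decorated with an element of $\GW(\kappa(x))$ twisted by $\ome{\kappa(x)}{L}$. Under $\Phi_L$, the Milnor--Witt symbol $\langle a\rangle[u_1,\ldots,u_n]$ maps to the class of the graph of $(u_1,\ldots,u_n):\spec L\to\gm^n$ with multiplicity $\langle a\rangle\in\GW(L)$.

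Next I would construct $\Psi_L$ on a generator supported at $x$ by applying iterated Rost--Schmid residues of $\sKMW$ along the coordinates of $\gm^n$, obtaining a (twisted) element of $\sKMW_0(\kappa(x))$, and then pushing forward along the Milnor--Witt trace $\sKMW_n(\kappa(x))\to\sKMW_n(L)$, whose transfer formalism is set up in the earlier papers of the series. A direct computation using the explicit formulas for residues of Milnor--Witt symbols yields $\Psi_L\circ\Phi_L=\id$, and the surjectivity of $\Phi_L$ then follows from the Milnor--Witt analog of the Bass--Tate generation lemma, which rewrites each such push-forward as a sum of symbols coming from $L$.

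The main obstacle is showing that $\Psi_L$ descends to the cokernel, i.e.\ that it vanishes on the image of $\partial_0-\partial_1$. This is a reciprocity statement for $\sKMW$ along the relative projective line $\mathbf{P}^1_L\times\gm^n_L$: the sum of residues at the closed fibers of $\mathbf{P}^1_L\to\spec L$ must annihilate any MW-correspondence pulled back from $\aone_L\wedge\gm^{\wedge n}$. This is the Milnor--Witt analog of the key $\mathbf{P}^1$-reciprocity step in Nesterenko--Suslin, and would be established by a careful analysis of the Rost--Schmid complex for $\sKMW$ on $\mathbf{P}^1_L\times\gm^n_L$, making essential use of Morel's vanishing-of-residues formula for $\sKMW$ on the projective line. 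Once $\Psi_L$ descends, the identity $\Phi_L\circ\Psi_L=\id$ follows from a direct verification on generators, completing the isomorphism.
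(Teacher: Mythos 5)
Your skeleton is correct and agrees with the paper's overall strategy: build a map $\theta_L$ (your $\Psi_L$) from $\HMW^{n,n}(L,\Z)$ to $\KMW_n(L)$ by reading off, from a finite MW-correspondence supported at a closed point $x\in\gm^n_L$ with residue field $F$, the symbol $[a_1,\ldots,a_n]$ of the coordinates of $x$ twisted by the $\GW$-coefficient, and pushing forward along $\Tr_{F/L}$; check well-definedness via homotopy invariance; and show that this is a two-sided inverse. The paper uses base change against the inclusions $j_0,j_1:\spec L\to\A^1_L$ plus homotopy invariance rather than $\mathbf{P}^1$-reciprocity, but this is a cosmetic difference and your version would also work.

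The genuine gap is the final sentence, where you claim that "the identity $\Phi_L\circ\Psi_L=\id$ follows from a direct verification on generators." Unpack what such a verification requires: a generator of $\HMW^{n,n}(L,\Z)$ is, by construction, $\Tr_{F/L}(\Phi_F(\beta))$ for $\beta\in\KMW_n(F)$ and $F/L$ finite, while $\Phi_L(\Psi_L(\cdot))$ applied to it produces $\Phi_L(\Tr_{F/L}(\beta))$. These agree precisely when $\Phi$ commutes with the transfer maps, i.e. $\Phi_L\circ\Tr_{F/L}=\Tr_{F/L}\circ\Phi_F$. This is not a direct check; it is the content of Theorem~\ref{thm:respect}, and its proof is the bulk of the paper. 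Moreover, the strategy there is necessarily bootstrapped: one must first establish the $n=1$ case of the main theorem without using transfers, via the filtration by degree of support and the explicit homotopy $F(u,t)=(1-u)p+uf$ with $f=(t-1)^{d-1}(t-(-1)^d\lambda_0)$ in Proposition~\ref{prop:inductionstep}, and only then can one prove compatibility with transfers by reducing modulo prime-to-$p$ extensions, invoking Morel's generation result \cite[Lemma 3.25]{Morel08} (valid only over fields with no prime-to-$p$ extensions), and carefully separating the separable and purely inseparable cases using the projection formula and the computation of $f^*f_*$. Your appeal to a ready-made "Milnor--Witt analog of the Bass--Tate generation lemma" does not exist in the generality you need: there are twists by $\omega_{F/L}$ to track, $\eta$ to account for, and the generation statement is only available after passing to $L'$. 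Without first nailing down the degree-one case and then the transfer compatibility, surjectivity of $\Phi_L$ does not follow, and the argument as written is incomplete.
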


The isomorphism in the theorem generalizes the result on (ordinary) motivic cohomology in the sense that the diagram commutes
\[
\xymatrix{\bigoplus_{n\in\Z} \KMW_n(L)\ar[r]^-{\Phi_L}\ar[d] & \bigoplus_{n\in\Z} \HMW^{n,n}(L,\Z)\ar[d] \\
\bigoplus_{n\in\N} \KM_n(L)\ar[r] & \bigoplus_{n\in\N} \H^{n,n}(L,\Z)}
\]
where the vertical homomorphisms are the ``forgetful'' homomorphisms and the bottom map is the isomorphism produced by Nesterenko-Suslin-Totaro. Unsurprisingly, our proof is very similar to theirs but there are some essential differences. For instance, the complex in weight one, denoted by $\tZ(1)$, admits an epimorphism to $\sKMW_1$ paralleling the epimorphism $\Z(1)\to \sKM_1$. However, we are not able to prove directly  that the kernel of the epimorphism $\tZ(1)\to \sKMW_1$ is acyclic. We are thus forced to compute by hand its cohomology at the right spot in Proposition \ref{prop:inductionstep}. This result being obtained, we then prove that $\Phi$ respects transfers for finitely generated field extensions. This is obtained in Theorem \ref{thm:respect} using arguments essentially identical to \cite[Lemma 5.11]{Mazza06} or \cite[Lemma 9.5]{Neshitov14}.

The paper is organized as follows. In Section \ref{sec:MWmotivic}, we review the basics of MW-motivic cohomology needed in the paper, adding useful results. For instance, we prove a projection formula in Theorem \ref{thm:projection} which is interesting on its own. In Section \ref{sec:main}, we proceed with the proof of our main theorem, starting with the construction to a left inverse of $\Phi$. We then pass to the proof that $\Phi$ is an isomorphism in degree $1$, which is maybe the most technical result of this work. As already mentioned above, we then conclude with the proof that $\Phi$ respects transfers, obtaining as a corollary our main result.  


\subsection*{Conventions}
The schemes are separated of finite type over some perfect field $k$ with $\mathrm{char}(k)\neq 2$. If $X$ is a smooth connected scheme over $k$, we denote by $\Omega_{X/k}$ the sheaf of differentials of $X$ over $\spec k$ and write $\ome{X}{k}:=\det\Omega_{X/k}$ for its canonical sheaf. In general we define $\ome{X}{k}$ connected component by connected component. We use the same notation if $X$ is the localization of a smooth scheme at any point. If $k$ is clear from the context, we omit it from the notation. If $f:X\to Y$ is a morphism of (localizations of) smooth schemes, we set $\omega_{f}=\ome{X}{k}\otimes f^*\ome{Y}{k}^\vee$. If $X$ is a scheme and $n\in\N$, we denote by $X^{(n)}$ the set of codimension $n$ points in $X$.


\section{MW-motivic cohomology}\label{sec:MWmotivic}

The general framework of this article is the category of finite MW-correspondences as defined in \cite[\S 4]{Calmes14b}. We briefly recall the construction of this category for the reader's convenience. If $X$ and $Y$ are smooth connected schemes over $k$, we say that a closed subset $T\subset X\times Y$ is admissible if its irreducible components (endowed with their reduced structure) are finite and surjective over $X$. The set $\Adm (X,Y)$ of admissible subsets of $X\times Y$ can be ordered by inclusions, and we can consider it as a category. For any $T\in \Adm(X,Y)$, we can consider the Chow-Witt group
\[
\chst {d_Y}T{X\times Y}{\omega_Y}
\]
where $d_Y$ is the dimension of $Y$ and $\omega_Y=p_Y^*\omega_{Y/k}$ with $p_Y:X\times Y\to Y$ the projection. If $T\subset T^\prime$ for two admissible subsets, then we consider the extension of support homomorphism
\[
\chst {d_Y}T{X\times Y}{\omega_Y}\to \chst {d_Y}{T^\prime}{X\times Y}{\omega_Y}
\]
and set $\cor k(X,Y)=\lim_{T\in \Adm(X,Y)}\chst {d_Y}T{X\times Y}{\omega_Y}$. The composition of finite MW-correspondences is obtained using the product of cycles in Chow-Witt groups with supports (\cite[\S 4.2]{Calmes14b}) and we obtain the category $\cor k$ whose objects are smooth schemes and morphisms are $\cor k(X,Y)$. The exterior product endows $\cor k$ with the structure of a symmetric monoidal category.

Having this category at hand, we may define the category of MW-presheaves $\psh k$ as the category of additive functors $\cor k\to \mathrm{Ab}$. For any smooth scheme $X$, we can define the presheaf $\tZX X$ by $Y\mapsto \cor k(Y,X)$ for any $Y$ and thus obtain the Yoneda embedding functor $\tZX {\_}:\cor k\to \psh k$. The category $\psh k$ is a symmetric monoidal category, with tensor product $\otimes$ uniquely defined by the property that the Yoneda embedding is monoidal, i.e. we have $\tZX X\otimes \tZX Y=\tZX {X\times Y}$. One can also define an internal Hom functor $\uHom$ which is characterized by the property that $\uHom (\tZX X,F)=F(X\times \_)$ for any $F\in \psh k$. 

Recall next that we have a functor $\tilde\gamma:\sm k\to \cor k$ which is the identity on objects and associates to a morphism of schemes the finite MW-correspondence described in \cite[\S 4.3]{Calmes14b} (which is basically the graph). This yields a functor $\tilde\gamma_*:\psh k\to \mathrm{PSh}_k$ where the latter is the category of presheaves (of abelian groups) on $\sm k$. As usual, we say that a presheaf with MW-transfer $F$ is a sheaf in a topology $\tau$, and we write that $F$ is a $\tau$-sheaf with MW-transfers, if $\tilde\gamma_*(F)$ is a sheaf in this topology. Usually, we consider either the Zariski or the Nisnevich topology on $\sm k$. Interestingly, the representable presheaves $\tZX X$ are Zariski sheaves with MW-transfers (\cite[Proposition 5.11]{Calmes14b}) but not Nisnevich sheaves with transfers (\cite[Example 5.12]{Calmes14b}). However, one can show that the sheaf associated to $F\in \psh k$ can be endowed with a (unique) structure of a sheaf with MW-transfers (\cite[Proposition 1.2.11]{Deglise16}). Note that it is easy to check that if $F$ is a $\tau$-sheaf with MW-transfers, then $\uHom (\tZX X,F)$ is also a $\tau$-sheaf with MW-transfers.

\subsection{Motivic cohomology}

Let $\tZ \{1\}$ be the Zariski sheaf with MW-transfers which is the cokernel of the morphism 
\[
\tZX k\to \tZX {\gmk k}
\]
induced by the unit in $\gmk k$. For any $q\in \Z$, we consider next the Zariski sheaf with MW-transfer $\tZ\{q\}$ defined by 
\[
\tZ\{q\}=\begin{cases} \tZ \{1\}^{\otimes q} & \text{if $q\geq 0$.} \\
\uHom(\tZ \{1\}^{\otimes q}, \tZX k) & \text{if $q<0$.} \end{cases}
\]

Let now $\Delta^{\bullet}$ be the cosimplicial object whose terms in degree $n$ are
\[
\Delta^n=\spec {k[t_0,\ldots,t_n]/(\sum t_i-1)}
\]
and with usual face and degeneracy maps. For any presheaf $F\in \psh k$, we obtain a simplicial presheaf $\uHom(\tZX {\Delta^{\bullet}},F)$ whose associated complex of presheaves with MW-transfers is denoted by $\Cstar(F)$. If $F$ is further a $\tau$-sheaf with MW-transfers, then $\Cstar F$ is a complex of sheaves with MW-transfers. In particular, $\tZ(q):=\Cstar \tZ\{q\}$ is such a complex and we have the following definition.

\begin{defin}
For any $p,q\in \Z$ and any smooth scheme $X$, we set
\[
\HMW^{p,q}(X,\Z)=\mathbb{H}^p_{\Zar}(X,\tZ (q)).
\]
\end{defin}

\begin{rem}
In \cite[\S 3.2.13, Definition 3.3.5]{Deglise16}, the motivic cohomology groups are defined using the complexes associated to the simplicial Nisnevich sheaves with MW-transfers constructed from the Nisnevich sheaves with transfers associated to the presheaves $\tZ (q)$. The two definitions coincide by \cite[Corollary 4.0.5]{Fasel16b}.
\end{rem}

The complexes $\Cstar \tZ(q)$ are in fact complexes of Zariski sheaves of $\KMW_0(k)$-modules (\cite[\S 5.3]{Calmes14b}), and it follows that the MW-motivic cohomology groups are indeed $\KMW_0(k)$-modules. These modules are by construction contravariantly functorial in $X$. Moreover, for any $p,q\in \Z$, we have a homomorphism of $\KMW_0(k)$-modules
\[
\HMW^{p,q}(X,\Z)\to \H^{p,q}(X,\Z)
\]
where the latter denotes the ordinary motivic cohomology group of $X$, with $\H^{p,q}(X,\Z)=0$ for $q<0$ and the $\KMW_0(k)$-module structure on the right-hand side is obtained via the rank homomorphism $\KMW_0(k)\to \Z$ (\cite[\S 6.1]{Calmes14b}).

Even though MW-motivic cohomology is defined a priori only for smooth schemes, it is possible to extend the definition to limits of smooth schemes, following the usual procedure (described for instance in \cite[\S 5.1]{Calmes14b}). In particular, we can consider MW-motivic cohomology groups $\HMW^{p,q}(L,\Z)$ for any finitely generated field extension $L/k$. We will use this routinely in the sequel without further comments.

\subsection{The ring structure}

The definition of MW-motivic cohomology given in \cite[Definition 3.3.5]{Deglise16} immediately yields a (bigraded) ring structure on MW-motivic cohomology
\[
\HMW^{p,q}(X,\Z)\otimes \HMW^{p^\prime,q^\prime}(X,\Z)\to \HMW^{p+p^\prime,q+q^\prime}(X,\Z)
\]
fulfilling the following properties.
\begin{enumerate}
\item The product is (bi-)graded commutative in the sense that 
\[
\HMW^{p,q}(X,\Z)\otimes \HMW^{p^\prime,q^\prime}(X,\Z)\to \HMW^{p+p^\prime,q+q^\prime}(X,\Z)
\] 
is $(-1)^{pp^\prime}\langle (-1)^{qq^\prime}\rangle$-commutative. In particular, $\HMW^{0,0}(X,\Z)$ is central and the $\KMW_0(k)$-module structure is obtained via the ring homomorphism 
\[
\KMW_0(k)=\HMW^{0,0}(k,\Z)\to \HMW^{0,0}(X,\Z).
\]
\item The homomorphism $\HMW^{*,*}(X,\Z)\to \H^{*,*}(X,\Z)$ is a graded ring homomorphism.
\end{enumerate}

\subsection{A projection formula}\label{sec:projection}

In this section, we prove a projection formula for finite surjective morphisms having trivial relative bundles. Let then $f:X\to Y$ be a finite surjective morphism between smooth connected schemes, and let $\chi:\OO_X\to \omega_f=\omega_{X/k}\otimes f^*\omega_{Y/k}^\vee$ be a fixed isomorphism. Recall from \cite[Example 4.17]{Calmes14b} that we have then a finite MW-correspondence $\alpha:=\alpha(f,\chi):Y\to X$ defined as the composite
\[
\sKMW_0(X)\simeq \sKMW_0(X,\omega_f)\stackrel {f_*}{\to} \chst {d_X}{\Gamma(X)}{Y\times X}{\omega_{Y\times X/k}\otimes \omega_{Y/k}^\vee}\simeq \chst {d_X}{\Gamma(X)}{Y\times X}{\omega_X}
\]
where the first isomorphism is induced by $\chi$, the second homomorphism is the push-forward along the (transpose of the) graph $\Gamma_f:X\to Y\times X$ and the third isomorphism is deduced from the isomorphisms of line bundles
\[
\omega_{Y\times X/k}\otimes \omega_{Y/k}^\vee\simeq \omega_Y\otimes \omega_X\otimes \omega_{Y/k}^\vee\simeq \omega_X\otimes \omega_Y\otimes  \omega_{Y/k}^\vee\simeq \omega_X
\]
where the second isomorphism is $(-1)^{d_Xd_Y}$-times the switch isomorphism.

We observe that $\alpha$ induces a "push-forward" homomorphism $F(X)\to F(Y)$ for any $F\in \psh k$ through the composite  
\[
F(X)=\mathrm{Hom}_{\psh k}(\tZX X,F)\stackrel{(\_)\circ \alpha}{\to} \mathrm{Hom}_{\psh k}(\tZX Y,F)=F(Y).
\]
In particular, we obtain homomorphisms
\[
f_*:\HMW^{p,q}(X,\Z)\to \HMW^{p,q}(Y,\Z)
\]
for any $p,q\in\Z$, which depend on the choice of $\chi$.

On the other hand, $f$ induces a finite MW-correspondence $X\to Y$ that we still denote by $f$ and therefore a pull-back homomorphism
\[
f^*:\HMW^{p,q}(Y,\Z)\to \HMW^{p,q}(X,\Z).
\]
We will need the following lemma to prove the projection formula.

\begin{lem}\label{lem:twoways}
Let $f:X\to Y$ be a finite surjective morphism between smooth connected schemes, and let $\chi:\OO_X\to \omega_f$ be an isomorphism. Let $\Delta_X$ (resp. $\Delta_Y$) be the diagonal embedding $X\to X\times X$ (resp. $Y\to Y\times Y$). Then, the following diagram commutes
\[
\xymatrix{Y\ar[rr]^-{\Delta_Y}\ar@{=}[d] & & Y\times Y\ar[r]^-{(1\times \alpha)} & Y\times X\ar@{=}[d] \\
Y\ar[r]_-{\alpha} & X\ar[r]_-{\Delta_X} & X\times X\ar[r]_-{f\times 1} & Y\times X,}
\]
i.e. $(1\times \alpha)\Delta_Y=(f\times 1)\Delta_X\alpha$.
\end{lem}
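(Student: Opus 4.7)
I plan to show that both sides coincide with the pushforward $\iota_* \chi$, where $\iota \colon X \to Y \times Y \times X$ is the closed immersion $x \mapsto (f(x), f(x), x)$. As a preliminary, I would verify by a direct set-theoretic computation with the composition formula for finite MW-correspondences that both sides are supported on $\iota(X)$ (both compositions are transverse in the appropriate ambient schemes).

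For the right-hand side, I use that $(f \times 1) \circ \Delta_X \colon X \to Y \times X$ is just the graph morphism $(f, 1_X)$, and hence the corresponding composition in $\cor k$ is $\tilde\gamma(f,1_X)$. Composition of a correspondence with a scheme morphism on the left is given by the pushforward of the underlying cycle along the identity-cross-morphism map, so the RHS is the pushforward of $\alpha = (\Gamma_f)_* \chi$ along $1_Y \times (f, 1_X) \colon Y \times X \to Y \times Y \times X$. Since $(1_Y \times (f, 1_X)) \circ \Gamma_f = \iota$, composing the two pushforwards yields $\iota_* \chi$.

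For the left-hand side, I first express $1_Y \otimes \alpha$ (a cycle on $(Y \times Y) \times (Y \times X)$) as the pushforward of $p_X^* \chi$ along the map $\phi \colon Y \times X \to Y \times Y \times Y \times X$, $(y, x) \mapsto (y, f(x), y, x)$, up to a possible $\langle -1\rangle$-twist arising from the symmetric monoidal swap of the two inner $Y$ factors. Composition with $\Delta_Y$ on the right in $\cor k$ translates, via the rule that composition with a scheme morphism on the right is a Gysin pullback, into applying $(\Delta_Y \times 1_{Y \times X})^*$ to $1_Y \otimes \alpha$. The crucial observation is that the square
\[
\xymatrix{
X \ar[r]^-{\Gamma_f} \ar[d]_-{\iota} & Y \times X \ar[d]^-{\phi} \\
Y \times Y \times X \ar[r]_-{\Delta_Y \times 1_{Y \times X}} & Y \times Y \times Y \times X
}
\]
is cartesian and transverse, which I check by a straightforward dimension count. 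The base change formula for Chow-Witt pushforward and Gysin pullback along this transverse cartesian square then gives $(\Delta_Y \times 1)^* \phi_*(p_X^* \chi) = \iota_* \Gamma_f^*(p_X^* \chi) = \iota_* \chi$, using $p_X \circ \Gamma_f = 1_X$.

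The main obstacle will be the bookkeeping of line bundle twists: each Chow-Witt pushforward carries an $\omega$-twist (by the relative canonical of the map one pushes along), and the rewriting of $1_Y \otimes \alpha$ as $\phi_*(p_X^*\chi)$ potentially introduces a sign from the symmetric monoidal swap. I expect these twists and signs to match on both sides using the canonical isomorphisms fixed at the beginning of this subsection, but the verification is a patient calculation following the conventions of \cite{Calmes14b}.
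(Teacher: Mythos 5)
Your proposal is correct and arrives at the same common value as the paper (you call it $\iota_*\chi$; the paper calls it $(\Gamma_{(\Delta_Y\circ f)}^t)_*(\langle 1\rangle)$, after using $\chi$ to identify $\sKMW_0(X)$ with $\sKMW_0(X,\omega_f)$ -- the two are the same), but it is organized differently. The paper unwinds each composite $(1\times\alpha)\Delta_Y$ and $(f\times 1)\Delta_X\alpha$ directly from the definition of composition in $\cor k$, building the big fiber-product diagrams over $Y\times Y\times Y\times Y\times X$ and $Y\times X\times Y\times X$, and then repeatedly applies base change and the projection formula from \cite{Calmes14b} to collapse them. You instead invoke two structural facts -- that post-composing with the graph of a (proper) scheme morphism is pushforward of the underlying cycle along the identity-cross-morphism map, and that pre-composing with the graph of a scheme morphism is Gysin pullback -- to reduce both sides to a single transverse Cartesian square, checked by a dimension count. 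This is shorter and arguably more conceptual, but it front-loads facts that the paper's proof effectively re-derives inline from the composition formula; to make your argument self-contained you would need to cite or prove those two rules in the MW-correspondence setting. You also correctly flag that the identification $1_Y\otimes\alpha\simeq\phi_*(p_X^*\chi)$ goes through the shuffle of the two middle $Y$-factors and the associated $\omega$-twists, which can a priori produce a sign; the paper sidesteps this by never writing $1\otimes\alpha$ as a single pushforward, instead keeping the two factors separate and combining them via projection/base change. Your deferred bookkeeping is exactly the part where care is required, but the plan is sound.
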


\begin{proof}
It suffices to compute both compositions, and we start with the top one. The composite of these two finite MW-correspondences is given by the commutative diagram
\[
\xymatrix{X\ar[r]\ar[d]_-{\Gamma_{(\Delta_Y\circ f)}^t}\ar[r]^-{\Gamma_{(\Delta_Y\circ f)}^t} & Y\times Y\times X\ar[r]\ar[d]^-{1\times \Gamma_{(1\times f)}^t} &Y\times X\ar[d]^-{\Gamma_{(1\times f)}^t} & \\
Y\times Y\times X\ar[r]^-{\Gamma_{\Delta_Y}\times 1\times 1}\ar[d] & Y\times Y\times Y\times Y\times X\ar[r]\ar[d] & Y\times Y\times Y\times X\ar[r]\ar[d] & Y\times X \\
Y\ar[r]_-{\Gamma_{\Delta_Y}} & Y\times Y\times Y\ar[r] \ar[d]& Y\times Y & \\
& Y & & }
\]
where the squares are Cartesian and the non-labelled arrows are projections (vertically to the first factors and horizontally to the last factors). The composite is given by the push-forward along the projection $p:Y\times Y\times Y\times Y\times X\to Y\times Y\times X$ defined by $(y_1,y_2,y_3,y_4,x)\mapsto (y_1,y_4,x)$ of the product of the respective pull-backs to $Y\times Y\times Y\times Y\times X$ of $(\Gamma_{\Delta_Y})_*(\langle 1\rangle)$ and $(\Gamma_{(1\times f)}^t)_*(\langle 1\rangle)$. Using the base change formula (\cite[Proposition 3.2]{Calmes14b}), we see that it amounts to push-forward the product
\[
(\Gamma_{\Delta_Y}\times 1\times 1)_*(\langle 1\rangle)\cdot (1\times \Gamma_{(1\times f)}^t)_*(\langle 1\rangle).
\]
Using the projection formula for Chow-Witt groups with supports (\cite[Corollary 3.5]{Calmes14b}), the latter equals
\[
(\Gamma_{\Delta_Y}\times 1\times 1)_*((\Gamma_{\Delta_Y}\times 1\times 1)^*((1\times \Gamma_{(1\times f)}^t)_*(\langle 1\rangle)))
\]
and the base-change formula once again shows that we have to push-forward along $p$ the cycle
\[
(\Gamma_{\Delta_Y}\times 1\times 1)_*(\Gamma_{(\Delta_Y\circ f)}^t)_*(\langle 1\rangle)
\]
Finally, the equality  $p\circ (\Gamma_{\Delta_Y}\times 1\times 1)=\id$ shows that the composite $(1\times \alpha)\circ \Delta_Y$ is given by the correspondence $(\Gamma_{(\Delta_Y\circ f)}^t)_*(\langle 1\rangle)$. 

For the second composite, we consider the following commutative diagram 
\[
\xymatrix{X\ar[r]^-{\Gamma_f^t}\ar[d]_-{\Gamma_{((f\times 1)\Delta_X)}} & Y\times X\ar[r]\ar[d]^-{1\times \Gamma_{((f\times 1)\Delta_X)}} & X\ar[d]^-{\Gamma_{((f\times 1)\Delta_X)}} & \\
X\times Y\times X\ar[r]^-{\Gamma_f^t\times 1\times 1}\ar[d] & Y\times X\times Y\times X\ar[r]\ar[d] & X\times Y\times X\ar[r]\ar[d] & Y\times X \\
X\ar[r]_{\Gamma_f^t} & Y\times X\ar[d]\ar[r] & X &  \\
& Y & & }
\]
where, as before, the squares are Cartesian and the non-labelled arrows are projections (vertically to the first factors and horizontally to the last factors). Arguing as above, we find that the composite is the push-forward along the projection $q:Y\times X\times Y\times X\to Y\times Y\times X$ omitting the second factor of the product
\[
(\Gamma_f^t\times 1\times 1)_*(\langle 1\rangle)\cdot (1\times \Gamma_{((f\times 1)\Delta_X)})_*(\langle 1\rangle).
\]
The projection and the base-change formulas show that the latter is equal to 
\[
(\Gamma_f^t\times 1\times 1)_*(\Gamma_{((f\times 1)\Delta_X)})_*(\langle 1\rangle)
\]
whose push-forward along $q$ is $(\Gamma_{(\Delta_Y\circ f)}^t)_*(\langle 1\rangle)$ as 
\[
q(\Gamma_f^t\times 1\times 1)(\Gamma_{((f\times 1)\Delta_X)})=\Gamma_{(\Delta_Y\circ f)}^t.
\]
\end{proof}

\begin{thm}[Projection formula]\label{thm:projection}
Let $f:X\to Y$ be a finite surjective morphism between smooth connected schemes, and let $\chi:\OO_X\to \omega_f$ be an isomorphism. For any $x\in \HMW^{p,q}(X,\Z)$ and $y\in \HMW^{p^\prime,q^\prime}(Y,\Z)$, we have 
\[
y\cdot f_*(x)=f_*(f^*y\cdot x)
\]
in $\HMW^{p+p^\prime,q+q^\prime}(Y,\Z)$.
\end{thm}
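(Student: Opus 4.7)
The plan is to reduce the identity to the commutative diagram established in Lemma \ref{lem:twoways} by unwinding, in the category of MW-correspondences, the three operations involved: the cup product, the pullback $f^*$, and the pushforward $f_*$ associated with $\chi$.

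First I would represent $x$ and $y$ as morphisms in the derived category of Nisnevich sheaves with MW-transfers. Concretely, $x$ is represented by a morphism $\tZX X \to \tZ(q)[p]$ (up to resolutions) and similarly for $y$. Under the Yoneda embedding, the correspondences $\alpha = \alpha(f,\chi) : Y \to X$ and $f : X \to Y$ induce morphisms of representable sheaves, and by definition $f_*(x) = x \circ \alpha$ and $f^*(y) = y \circ f$. The cup product, on the other hand, is obtained from the symmetric monoidal structure on $\psh k$ together with the diagonal: for any smooth connected scheme $Z$, the product $a \cdot b$ of classes represented by $a : \tZX Z \to A$ and $b : \tZX Z \to B$ is the composite
\[
\tZX Z \xrightarrow{\Delta_Z} \tZX Z \otimes \tZX Z \xrightarrow{a \otimes b} A \otimes B.
\]
This monoidal description is exactly the one used in \cite[Definition 3.3.5]{Deglise16} to define the ring structure on $\HMW^{*,*}$.

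Next I would substitute these descriptions into both sides of the claimed identity. The left-hand side expands as
\[
y \cdot f_*(x) \;=\; (y \otimes x)\circ (1_Y \otimes \alpha)\circ \Delta_Y,
\]
which is precisely the top row of the diagram in Lemma \ref{lem:twoways}, read as a composite of MW-correspondences $Y \to Y \times X$ followed by the external product $y \otimes x$. Similarly the right-hand side expands as
\[
f_*\bigl(f^* y \cdot x\bigr) \;=\; (y \otimes x)\circ (f \times 1_X)\circ \Delta_X \circ \alpha,
\]
which is the bottom row of the same diagram, again followed by $y \otimes x$. Lemma \ref{lem:twoways} asserts exactly that the two corresponding MW-correspondences $Y \to Y \times X$ coincide, so post-composing both with $y \otimes x$ yields the desired equality in $\HMW^{p+p',q+q'}(Y,\Z)$.

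The one nontrivial point to check is that the passage from the sheaf-level compositions above to the cohomology classes goes through without subtlety; this amounts to verifying that the Suslin complex functor $\Cstar$ is monoidal and commutes with the operations $\alpha \mapsto (\_)\circ\alpha$ and $f \mapsto (\_)\circ f$, which is automatic from the fact that these are just precompositions in $\psh k$ and $\Cstar$ is the simplicial object $\uHom(\tZX{\Delta^\bullet}, -)$. The main obstacle is therefore entirely contained in Lemma \ref{lem:twoways}; once that geometric identity of correspondences is granted, the projection formula is a formal consequence of the monoidal description of the cup product and the definitions of $f^*$ and $f_*$.
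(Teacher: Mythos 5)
Your proposal is correct and follows essentially the same route as the paper: interpret classes as morphisms in the MW-motivic derived category (the paper works directly in $\widetilde{\mathrm{DM}}^{\mathrm{eff}}(k)$), express $y\cdot f_*(x)$ and $f_*(f^*y\cdot x)$ as $(y\otimes x)\circ(1\otimes\alpha)\circ\Delta_Y$ and $(y\otimes x)\circ(f\times 1)\circ\Delta_X\circ\alpha$ respectively, and conclude by Lemma~\ref{lem:twoways}. Your closing paragraph on compatibility with $\Cstar$ is a detail the paper bypasses by phrasing everything inside $\widetilde{\mathrm{DM}}^{\mathrm{eff}}(k)$, where the monoidal product is already in place.
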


\begin{proof}
Let $\widetilde{\mathrm{DM}}^{\mathrm{eff}}(k)$ be the category of MW-motives (\cite[\S 3.2]{Deglise16}). By \cite[Corollary 3.3.8]{Deglise16}, we have $\HMW^{p,q}(X,\Z)=\mathrm{Hom}_{\widetilde{\mathrm{DM}}^{\mathrm{eff}}(k)}(\tilde M(X),\tZ\{q\}[p-q])$ for any $p,q\in\Z$. The product structure on MW-motivic cohomology is obtained via the tensor product as follows. If $x$ and $x^\prime$ are respectively in $\mathrm{Hom}_{\widetilde{\mathrm{DM}}^{\mathrm{eff}}(k)}(\tilde M(X),\tZ\{q\}[p-q])$ and $\mathrm{Hom}_{\widetilde{\mathrm{DM}}^{\mathrm{eff}}(k)}(\tilde M(X),\tZ\{q^\prime\}[p^\prime-q^\prime])$, we can take their tensor product to get a morphism $x\otimes x^\prime$ in $\mathrm{Hom}_{\widetilde{\mathrm{DM}}^{\mathrm{eff}}(k)}(\tilde M(X)\otimes \tilde M(X),\tZ\{q\}\otimes \tZ\{q^\prime\}[p+p^\prime-q-q^\prime])$. Now, $\tilde M(X)\otimes \tilde M(X)=\tilde M(X\times X)$ and the diagonal morphism $\Delta_X:X\to X\times X$ induces a morphism $\tilde M(X)\to \tilde M(X\times X)$. Composing the latter with $x\otimes x^\prime$, we obtain a morphism $x\cdot x^\prime\in \mathrm{Hom}_{\widetilde{\mathrm{DM}}^{\mathrm{eff}}(k)}(\tilde M(X),\tZ\{q\}\otimes \tZ\{q^\prime\}[p+p^\prime-q-q^\prime])$ which represent the product of $x$ and $x^\prime$ (after identification of $\tZ\{q\}\otimes \tZ\{q^\prime\}$ with $\tZ\{q+q^\prime\}$). 

This being said, let then $x\in \mathrm{Hom}_{\widetilde{\mathrm{DM}}^{\mathrm{eff}}(k)}(\tilde M(X),\tZ\{q\}[p-q])$ and let $y\in \mathrm{Hom}_{\widetilde{\mathrm{DM}}^{\mathrm{eff}}(k)}(\tilde M(Y),\tZ\{q^\prime\}[p^\prime-q^\prime])$. The product $y\cdot f_*(x)$ is then of the form $(y\otimes x)\circ (1\otimes \alpha)\circ \Delta_Y$, while $f_*(f^*y\cdot x)$ is of the form $(y\otimes x)\circ (f\otimes 1)\circ \Delta_X\circ \alpha$. The result then follows from Lemma \ref{lem:twoways}.
\end{proof}

\begin{rem}
It would suffice to have a fixed isomorphism $\L\otimes \L\simeq \omega_f$ (for some line bundle $\L$ on $X$) to get an orientation in the sense of \cite[\S 2.2]{Calmes14b} and thus a finite MW-correspondence $\alpha$ as above. We let the reader make the necessary modifications in the arguments of both Lemma \ref{lem:twoways} and Theorem \ref{thm:projection}.
\end{rem}

\begin{rem}
It follows from \cite[Theorem 3.4.3]{Deglise16} that the same formula holds for the left module structure, i.e.
\[
f_*(x)\cdot y =f_*(x\cdot f^*y).
\]
\end{rem}

\begin{exem}\label{ex:inseparable}
As usual, it follows from the projection formula that the composite $f_*f^*$ is multiplication by $f_*(\langle 1\rangle)$. Let us now compute $f^*f_*$ in some situations that will be used later. Let us start with the general situation, i.e. $f:X\to Y$ is a finite surjective morphism and $\chi:\OO_X\to \omega_f$ an isomorphism. The composite $f^*f_*$ is given by precomposition with the correspondence $f\circ\alpha(f,\chi)$ which we can compute using the diagram
\[
\xymatrix{X\times_Y X\ar[r]^-{(1\times 1)}\ar[d]_-{(1\times 1)} & X\times X\ar[r]\ar[d]_-{1\times \Gamma_f^t} & X\ar[d]^-{\Gamma_f^t} & \\
X\times X\ar[r]_-{\Gamma_f\times 1}\ar[d] & X\times Y\times X\ar[r]\ar[d] & Y\times X\ar[r]\ar[d] & X \\
X\ar[r]_-{\Gamma_f} & X\times Y\ar[d]\ar[r] & Y &  \\
 & X & & }
\] 
where the non-labelled vertical arrows are projections on the first factor and the non-labelled horizontal arrows are projections on the second factor. As usual, the base change formula shows that the composite is equal to the projection on $X\times X$ of 
\[
(\Gamma_f\times 1)_*(\langle 1\rangle)\cdot (1\times \Gamma_f^t)_*(\langle 1\rangle).
\] 
In general the top left square is not transverse, and we can't use the base-change formula to compute the above product. 

Suppose now that $f:X\to Y$ is finite and \'etale. In that case, we have a canonical isomorphism $f^*\omega_Y\simeq \omega_X$ yielding a canonical choice for the isomorphism
\[
\chi:\OO_X\to \omega_f.
\] 
Moreover, $X\times_Y X$ decomposes as $X\times_Y X=X_1\sqcup X_2\sqcup \ldots \sqcup X_n$ where each term $X_i$ is finite and \'etale over $X$ with "structural" morphism $p_i:X_i\to X$. In that case, the above top right square is transverse and we see that 
\[
(\Gamma_f\times 1)_*(\langle 1\rangle)\cdot (1\times \Gamma_f^t)_*(\langle 1\rangle)=(\Gamma_f\times 1)_*(\Delta_*\sum (p_i)_*(\langle 1\rangle)).
\] 
where $\Delta:X\to X\times X$ is the diagonal map. Thus the composite $f\circ\alpha(f,\chi)$ is equal to $\Delta_*\sum (p_i)_*(\langle 1\rangle)$. It follows immediately that we have a commutative diagram
\[
\xymatrix{\HMW^{p,q}(X,\Z)\ar[r]^-{\sum p_i^*}\ar[d]_-{f_*} & \bigoplus_i \HMW^{p,q}(X_i,\Z) \ar[d]^-{\sum (p_i)_*} \\
\HMW^{p,q}(Y,\Z)\ar[r]_-{f^*} & \HMW^{p,q}(X,\Z)}
\]
for any $p,q\in \Z$. 

Suppose next that $\mathrm{char}(k)=p$, that $X\subset Y\times \A^1$ is the set of zeroes of $t^p-a$ for some global section $a\in \OO_Y(Y)$ (we still suppose that $X$ is smooth over $k$). In that case, we see that the reduced scheme of $X\times_Y X$ is just $X$ (but the former has nilpotent elements) and it follows that $f\circ\alpha(f,\chi)$ is a correspondence supported on the diagonal $\Delta(X)\subset X\times X$. It follows that there is an element $\sigma\in \KMW_0(X)$ such that the following diagram commutes
\[
\xymatrix{\sKMW_0(X)\ar[r]\ar[d]_-{\cdot \sigma} & \cor k(X,X)\ar[d]^-{f\circ\alpha(f,\chi)}\\
\sKMW_0(X)\ar[r] & \cor k(X,X)}
\]
where the horizontal arrows are induced by the push-forward map $\Delta_*:\sKMW_0(X)\to \chst {d_X}{\Delta(X)}{X\times X}{\omega_X}$. Now, $\sigma$ can be computed using the composite $\KMW_0(k(X))\to \KMW_0(k(Y))\to \KMW_0(k(X))$, where the first map is the push-forward (defined using $\chi$) and the second map the pull-back. It follows essentially from \cite[Lemme 6.4.6]{Fasel08a} that $\sigma=p_{\epsilon}$.
\end{exem}

\subsection{The homomorphism}\label{sec:homomorphism}

Let $L/k$ be a finitely generated field extension. It follows from the definition of MW-motivic cohomology that $\HMW^{p,q}(L,\Z)=0$ provided $p>q$. The next step is then to identify $\HMW^{p,p}(L,\Z)$. To this aim,  we constructed in \cite[Theorem 4.2.2]{Deglise16} a graded ring homomorphism
\[
\KMW_*(L)\to \bigoplus_{n\in\Z}\HMW^{n,n}(L,\Z) 
\]
which we now recall. For $a\in L^\times$, we can consider the corresponding morphism $a:\spec L\to \gmk k$ which defines a finite MW-correspondence $\Gamma_a$ in $\cor k(L,\gmk k)$. Now, we have a surjective homomorphism $\cor k(L,\gmk k)\to \HMW^{1,1}(L,\Z)$ and we let $s([a])$ be the image of $\Gamma_a$ under this map. Next, consider the element 
\[
\eta[t]\in \sKMW_0(\gmk L)=\cor k(\gmk L,k)=\cor k(\gmk k\times L,k)=\uHom (\tZX {\gmk k},\tZX k)(L).
\] 
We define $s(\eta)$ to be its image under the projections
\[
\uHom (\tZX {\gmk k},\tZX k)(L)\to \uHom (\tZ \{1\},\tZX k)(L)\to \HMW^{-1,-1}(L,\Z).
\]
The following theorem is proved in \cite[Theorem 4.2.2]{Deglise16} (using computations of \cite[\S 6]{Fasel16b}).

\begin{thm}
The associations $[a]\mapsto s([a])$ and $\eta\mapsto s(\eta)$ induce a homomorphism of graded rings
\[
\Phi_L:\KMW_*(L)\to \bigoplus_{n\in\Z}\HMW^{n,n}(L,\Z).
\]
\end{thm}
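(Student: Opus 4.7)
My plan is to invoke Morel's universal presentation of Milnor-Witt $K$-theory: $\KMW_*(L)$ is generated as a graded ring by symbols $[a]$ for $a\in L^\times$ in degree $+1$ together with $\eta$ in degree $-1$, subject to the four Milnor-Witt relations (Steinberg $[a][1-a]=0$; multiplicativity $[ab]=[a]+[b]+\eta[a][b]$; centrality $[a]\eta=\eta[a]$; and $\eta h=0$ for $h=2+\eta[-1]$). Since $\bigoplus_n \HMW^{n,n}(L,\Z)$ already carries a graded ring structure, to extend $[a]\mapsto s([a])$ and $\eta\mapsto s(\eta)$ to a ring homomorphism $\Phi_L$ it suffices to check these four relations for these specific elements.

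Three of the relations should be essentially formal. Multiplicativity reflects an identity in $\cor k(\spec L,\gm)$ between the graph $\Gamma_{ab}$ and the composite of $\Gamma_a\times \Gamma_b$ with the multiplication map $\gm\times \gm\to \gm$; it is the direct geometric shadow of Morel's formula and follows once the decomposition of this multiplication in $\cor k$ is made explicit. Centrality reduces to the graded commutativity of the MW-motivic cup product in bidegrees $(1,1)$ and $(-1,-1)$, after reconciling the $\langle -1\rangle$-sign arising from the switch with Morel's normalization. The $h$-relation for $s(\eta)$ reduces to the identity $\eta h=0$ already known in $\KMW_0(k)$ via the natural $\KMW_0(k)$-module structure on $\HMW^{*,*}(L,\Z)$.

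The real content is the Steinberg relation $s([a])\cdot s([1-a])=0$. Under the MW-motivic cup product this class is represented in $\HMW^{2,2}(L,\Z)$ by the finite MW-correspondence $\Gamma_{(a,1-a)}:\spec L\to \gm\times \gm$ followed by the projection $\tZX{\gm\times\gm}\to \tZ\{1\}^{\otimes 2}=\tZ\{2\}$. To show this class is a boundary I would construct an explicit element of $\cor k(\A^1_L,\gm\times\gm)$ from the one-parameter family $t\mapsto (at,1-at)$ on the open locus $\{at\neq 0,1\}\subset \A^1_L$, package it as an $\A^1$-chain in $\Cstar\tZ\{2\}(L)$, and verify that its differential recovers $\Gamma_{(a,1-a)}$ modulo the unit sections used to form $\tZ\{1\}$ out of $\tZX{\gm}$. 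The delicate bookkeeping lies in matching the Chow-Witt orientations along this homotopy; the projection formula of Theorem~\ref{thm:projection} provides the correct twists by $\omega_{f}$ at each step, and this is the reason the Steinberg relation, rather than the other three, is the difficult one.
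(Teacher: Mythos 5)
The paper does not prove this theorem itself: it is cited from \cite[Theorem 4.2.2]{Deglise16}, and the strategy there (as the Introduction of the present paper summarizes) is entirely different from yours. Rather than verifying Morel's four relations one by one, the construction in \cite{Deglise16} starts from the natural morphism of sheaves $\gm^{\wedge n}\to \mathbf{H}_{\mathrm{MW}}^{n,n}$, observes that the target is strictly $\A^1$-invariant, and invokes Morel's universal property of $\sKMW_n$ \cite[Theorem 3.37]{Morel08} to obtain the factorization through $\sKMW_n$ automatically. On that route the Steinberg relation is not checked by hand at all --- it is absorbed into the universal property --- and the remaining verifications in \cite[\S 6]{Fasel16b} concern compatibility with the ring structure and the identification of $s(\eta)$, not the relations themselves. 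Your plan of checking the four relations directly is a legitimate alternative strategy, but it is not the one the paper relies on, and you should be aware that the paper's reference sidesteps precisely the hardest step you have singled out.

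Beyond the difference in strategy, there is a concrete gap in your Steinberg argument. You propose to bound $\Gamma_{(a,1-a)}$ by ``the one-parameter family $t\mapsto(at,1-at)$ on the open locus $\{at\neq 0,1\}\subset\A^1_L$'' packaged as an element of $\cor k(\A^1_L,\gm\times\gm)$. But the closure of this graph inside $\A^1_L\times\gm\times\gm$ is \emph{not} finite over $\A^1_L$: the curve escapes the open subscheme $\gm\times\gm$ over $t=0$ (where $at=0$) and over $t=1/a$ (where $1-at=0$), so no finite MW-correspondence is produced. This is the same obstruction that makes the Steinberg relation nontrivial in ordinary motivic cohomology, where the Nesterenko--Suslin--Totaro proof (and the treatment in \cite[Lecture 5]{Mazza06}) replaces the naive homotopy with a considerably more delicate construction. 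A working direct proof in the MW setting would need an analogous replacement, and in addition would need to control the quadratic refinements: the residue of $\langle 1\rangle$-cycles along a homotopy produces extra symmetric-bilinear-form data that has no counterpart in the Milnor case. As written, the key step of your argument does not go through. The same caution, to a lesser degree, applies to the ``essentially formal'' relations: the multiplicativity $[ab]=[a]+[b]+\eta[a][b]$ requires an explicit decomposition of the multiplication correspondence $\gm\times\gm\to\gm$ in $\cor k$ including its quadratic part, and the centrality $[a]\eta=\eta[a]$ against the $\langle-1\rangle$-twisted graded commutativity uses the relation $\eta\cdot\langle-1\rangle=-\eta$, which is itself one of the identities one is trying to establish for $s(\eta)$; neither is circular if handled carefully, but neither is purely formal.
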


By construction, the above homomorphism fits in a commutative diagram of graded rings
\[
\xymatrix{\KMW_*(L)\ar[r]^-{\Phi_L}\ar[d] & \bigoplus_{n\in\Z}\HMW^{n,n}(L,\Z)\ar[d] \\
\KM_*(L)\ar[r] & \bigoplus_{n\in\Z}\H^{n,n}(L,\Z)}
\]
where the vertical projections are respectively the natural map from Milnor-Witt $K$-theory to Milnor $K$-theory and the ring homomorphism of the previous section, and the bottom horizontal homomorphism is the map constructed by Totaro-Nesterenko-Suslin.


\section{Main theorem}\label{sec:main}

\subsection{A left inverse}\label{sec:left}

In this section, we construct for $q\geq 0$ a left inverse to the homomorphism $\Phi_L$ of Section \ref{sec:homomorphism}. By definition, 
\[
\tZX {\gm^q}(L):=\bigoplus_{x\in (\gmk L^q)^{(q)}}\chst qx{\gmk L^q}{\omega_{\gm^q}}.
\] 
Now, for any point $x$ in $(\gmk L^q)^{(q)}$ with maximal ideal $\mathfrak m$, we have an exact sequence
\[
\mathfrak m/\mathfrak m^2\to \Omega_{\gmk L^q/k}\to \Omega_{L(x)/k}\to 0.
\]
Using the fact that $k$ is perfect and counting dimensions, we see that this sequence is also exact on the left. We find an isomorphism
\[
\wedge^q (\mathfrak m/\mathfrak m^2)^\vee\otimes \omega_{\gmk L^q/k}\simeq \omega_{L(x)/k}
\]
Now, $\omega_{\gmk L^q/k}\simeq p_1^*\omega_{\gm^q/k}\otimes p_2^*\omega_{L/k}$ and it follows that 
\[
\wedge^q (\mathfrak m/\mathfrak m^2)^\vee\otimes \omega_{\gm^q}\simeq \omega_{L(x)/k}\otimes_{L}\omega_{L/k}^\vee
\]
yielding
\[
\tZX {\gm^q}(L)=\bigoplus_{x\in (\gmk L^q)^{(q)}} \KMW_0(L(x),\omega_{L(x)/k}\otimes_{L}\omega_{L/k}^\vee).
\]
Now any closed point $x$ in $(\gmk L^q)^{(q)}$ can be identified with a $q$-uple $(x_1,\ldots,x_q)$ of elements of $L(x)$. For any such $x$, we define a homomorphism
\[
f_{x}:\KMW_0(L(x),\omega_{L(x)/k}\otimes_{L}\omega_{L/k}^\vee)\to \KMW_q(L)
\]
by $f_x(\alpha)=\Tr_{L(x)/L}(\alpha\cdot [x_1,\ldots,x_q])$. We then obtain a homomorphism
\[
f:\tZX {\gm^q}(L)\to \KMW_q(L)
\]
which is easily seen to factor through $(\tZ\{q\})(L)$ since $[1]=0\in \KMW_1(L)$. 

We now check that this homomorphism vanishes on the image of $(\tZ\{q\})(\A^1_L)$ in $(\tZ\{q\})(L)$ under the boundary homomorphism. This will follow from the next lemma.

\begin{lem}
Let $Z\in \Adm(\A^1_L,\gm^q)$. Let moreover $p:\gmk L^q\to \spec L$ and $p_{\A^1_L}:\A^1_L\times \gm^q\to \A^1_L$ be the projections and $Z_i:=p_{\A^1_L}^{-1}(i)\cap Z$ (endowed with its reduced structure) for $i=0,1$. Let $j_i:\spec L\to \A^1_L$ be the inclusions in $i=0,1$ and let $g_i:\gmk L^q\to \A^1_L\times \gm^q$ be the induced maps.  Then the homomorphisms
\[
p_*(g_i)^*:\chst qZ{\A^1_L\times \gm^q}{\omega_{\gm^q}}\to \chst q{Z_i}{\gmk L^q}{\omega_{\gm^q}}\to \KMW_0(L)
\]
are equal.
\end{lem}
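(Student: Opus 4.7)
The plan is to factor both homomorphisms $p_*(g_i)^*$ through a common pushforward along the projection $p_{\A^1_L} : \A^1_L \times \gm^q \to \A^1_L$, and then to appeal to $\A^1$-homotopy invariance of Chow-Witt groups in codimension zero.

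First, I would observe that the square
\[
\xymatrix{\gm^q_L \ar[r]^-{g_i} \ar[d]_-{p} & \A^1_L \times \gm^q \ar[d]^-{p_{\A^1_L}} \\ \spec L \ar[r]_-{j_i} & \A^1_L}
\]
is Cartesian and transverse, since $j_i$ is a regular closed immersion of codimension one (with trivial normal bundle) and $g_i$ is its base change along the smooth morphism $p_{\A^1_L}$. Since $Z \in \Adm(\A^1_L, \gm^q)$, the restriction $(p_{\A^1_L})|_Z : Z \to \A^1_L$ is finite and surjective, so there is a well-defined pushforward
\[
(p_{\A^1_L})_* : \chst qZ{\A^1_L \times \gm^q}{\omega_{\gm^q}} \to \chst 0{\A^1_L}{\A^1_L}{\tau}
\]
with twist $\tau = \omega_{\gm^q} \otimes \omega_{p_{\A^1_L}}^\vee$, which is canonically trivial since $\omega_{p_{\A^1_L}} \simeq p_{\gm^q}^* \omega_{\gm^q/k} = \omega_{\gm^q}$.

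By the base change formula \cite[Proposition 3.2]{Calmes14b} applied to the transverse Cartesian square above, we obtain $p_* \circ g_i^* = j_i^* \circ (p_{\A^1_L})_*$, so it suffices to prove that $j_0^* = j_1^*$ as homomorphisms $\ch 0{\A^1_L} \to \ch 0{\spec L} = \KMW_0(L)$. This is immediate from the $\A^1$-homotopy invariance of the sheaf $\sKMW_0$: the pullback along the structural projection $\A^1_L \to \spec L$ is an isomorphism, and $j_i^*$ is a retraction of this pullback for both $i = 0, 1$, so the two restrictions coincide. The main obstacle is the twist bookkeeping through the pushforward, namely checking that $\omega_{p_{\A^1_L}}$ really cancels $\omega_{\gm^q}$ under the conventions of \cite{Calmes14b} so that the target group is identifiable with $\KMW_0(L)$; once this is in place the rest reduces to a direct application of base change and $\A^1$-invariance.
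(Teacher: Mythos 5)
Your proposal is correct and follows essentially the same route as the paper's proof: identify the Cartesian square $g_i, p, j_i, p_{\A^1_L}$, apply the base-change formula to get $p_*(g_i)^* = (j_i)^*(p_{\A^1_L})_*$, and conclude with $(j_0)^* = (j_1)^*$ from $\A^1$-homotopy invariance. You supply some details the paper leaves implicit (transversality of the square, the twist cancellation $\omega_{p_{\A^1_L}} \simeq \omega_{\gm^q}$), but the argument is the same.
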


\begin{proof}
For $i=0,1$, consider the Cartesian square
\[
\xymatrix{\gmk L^q\ar[r]^-{g_i}\ar[d]_-{p} & \A^1_L\times \gmk L^q\ar[d]^-{p_{\A^1_L}}\\ 
\spec L\ar[r]_-{j_i} & \A^1_L} 
\]
We have $(j_i)^*(p_{\A^1_L})_*=p_*(g_i)^*$ by base change. The claim follows from the fact that $(j_0)^*=(j_1)^*$ by homotopy invariance.
\end{proof}

\begin{prop}\label{prop:left}
The homomorphism $f:\tZX {\gm^q}(L)\to \KMW_q(L)$ induces a homomorphism
\[
\theta_L:\HMW^{q,q}(L,\Z)\to \KMW_q(L)
\]
for any $q\geq 1$. 
\end{prop}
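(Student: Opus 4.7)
I plan to show that $f$ descends to $\theta_L$ by verifying it vanishes on the simplicial relations defining the quotient $\tZ\{q\}(L)\twoheadrightarrow\HMW^{q,q}(L,\Z)$. Since $\spec L$ has no higher Zariski cohomology, $\HMW^{q,q}(L,\Z)$ is computed by the complex of global sections $\Cstar\tZ\{q\}(L)$, and in the relevant cohomological spot it is the cokernel of the simplicial boundary $i_0^*-i_1^*\colon\tZ\{q\}(\A^1_L)\to\tZ\{q\}(L)$ (after identifying $\Delta^1_L$ with $\A^1_L$). So I need to check $f\circ i_0^*=f\circ i_1^*$ on $\tZ\{q\}(\A^1_L)$, and by the surjection $\tZX{\gm^q}(\A^1_L)\twoheadrightarrow\tZ\{q\}(\A^1_L)$ it suffices to verify the equality on a class $\gamma$ supported on an admissible cycle $Z\in\Adm(\A^1_L,\gm^q)$.

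The strategy is to reinterpret $f$ as a pushforward of a product with the universal class $[t_1,\dots,t_q]\in\sKMW_q(\gm^q)$, and then apply the base-change and homotopy-invariance argument of the preceding lemma. For a closed point $x\in(\gm^q_L)^{(q)}$ with coordinates $(x_1,\dots,x_q)$, the restriction of $[t_1,\dots,t_q]$ to $x$ equals $[x_1,\dots,x_q]$, so
\[
f_x(\alpha)=\Tr_{L(x)/L}\bigl(\alpha\cdot[x_1,\dots,x_q]\bigr)=p_*\bigl(\alpha\cdot[t_1,\dots,t_q]\bigr),
\]
where $p\colon\gm^q_L\to\spec L$ and the pushforward is taken in cohomology with coefficients in $\sKMW_q$, landing in $\sKMW_q(L)=\KMW_q(L)$. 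Running the same construction on the relative setting $p_{\A^1_L}\colon\A^1_L\times\gm^q\to\A^1_L$, applied to the class $\gamma\cdot p_{\gm^q}^*[t_1,\dots,t_q]$ with support on $Z$, the Cartesian-square base-change identity from the proof of the preceding lemma gives
\[
f(i_i^*\gamma)=p_*(g_i)^*\bigl(\gamma\cdot p_{\gm^q}^*[t_1,\dots,t_q]\bigr)=(j_i)^*(p_{\A^1_L})_*\bigl(\gamma\cdot p_{\gm^q}^*[t_1,\dots,t_q]\bigr),
\]
for $i=0,1$. By Morel's strict $\A^1$-invariance of $\sKMW_q$ we have $\sKMW_q(\A^1_L)=\KMW_q(L)$ with $j_0^*=j_1^*$, so $f(i_0^*\gamma)=f(i_1^*\gamma)$, which delivers $\theta_L$.

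The main technical obstacle I anticipate is making precise sense of the enriched product $\gamma\cdot p_{\gm^q}^*[t_1,\dots,t_q]$: the class $\gamma$ has $\sKMW_0$ coefficients (being a Chow-Witt class with support) while $[t_1,\dots,t_q]$ has $\sKMW_q$ coefficients, so one must work with cohomology-with-support valued in the (twisted) sheaf $\sKMW_q$, and verify that its Chow-Witt pushforward along $p_{\A^1_L}$ really implements the trace-and-multiply formula defining $f$ fiberwise. Once this bookkeeping is justified, the remainder is a direct application of the preceding lemma together with the $\A^1$-invariance of Milnor-Witt $K$-theory.
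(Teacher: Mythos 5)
Your argument is correct and follows essentially the same route as the paper: present $\HMW^{q,q}(L,\Z)$ as the cokernel of the two face maps $\tZ\{q\}(\A^1_L)\to\tZ\{q\}(L)$, reinterpret $f$ as a pushforward $p_*\bigl((-)\cdot p_{\gm^q}^*[t_1,\ldots,t_q]\bigr)$, and combine base change around the Cartesian square $\gmk L^q\to\A^1_L\times\gm^q$, $\spec L\to\A^1_L$ with strict $\A^1$-invariance of $\sKMW_*$ to show that $f$ coequalizes the two restrictions. The only cosmetic slip is that the intermediate class $\gamma\cdot p_{\gm^q}^*[t_1,\ldots,t_q]$ lives in cohomology with supports valued in $\sKMW_{2q}$ (not $\sKMW_q$) before pushing forward along $p$, which does not affect the argument.
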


\begin{proof}
Observe that the group $\HMW^{q,q}(L,\Z)$ is the cokernel of the homomorphism
\[
\partial_0-\partial_1:\tZ\{q\}(\A^1_L)\to \tZ\{q\}(L)
\]
It follows from \cite[Example 4.16]{Calmes14b} that $\partial_i:\tZX {\gm^q}(\A^1_L)\to \tZX {\gm^q}(L)$ is induced by $g_i^*$. We can use the above lemma to conclude.
\end{proof}

\begin{coro}\label{cor:splitinjective}
The homomorphism 
\[
\Phi_L:\bigoplus_{n\in\Z} \KMW_n(L) \to \bigoplus_{n\in\Z} \HMW^{n,n}(L,\Z).
\]
is split injective.
\end{coro}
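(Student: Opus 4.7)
The plan is to verify that the homomorphism $\theta_L$ constructed in Proposition \ref{prop:left} provides the required left inverse. Since $\Phi_L$ is a graded morphism of abelian groups, split injectivity on the direct sum reduces to constructing a left inverse weight by weight; Proposition \ref{prop:left} already supplies such a map in each weight $q\geq 1$, while in weight zero the restriction of $\Phi_L$ to $\KMW_0(L)$ is essentially the canonical identification with $\HMW^{0,0}(L,\Z)$ and is trivially split, and in negative weights one can adapt the construction of Section \ref{sec:left} in the same spirit (or reduce to the positive case via multiplication by $\eta$). The heart of the proof is therefore the identity $\theta_L\circ \Phi_L=\id_{\KMW_q(L)}$ for every $q\geq 1$.

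To check this I would compute directly on the generators $[a_1]\cdots[a_q]$ with $a_i\in L^\times$. Since $\Phi_L$ is a ring homomorphism, this element is mapped to the product $s([a_1])\cdots s([a_q])$ in $\HMW^{q,q}(L,\Z)$; unwinding the construction of the product as recalled in the proof of Theorem \ref{thm:projection} together with the description of the generators in Section \ref{sec:homomorphism}, this product is represented in $\tZX{\gm^q}(L)$ by the graph correspondence of the morphism $\spec L\to \gm^q_k$ sending the unique point to the rational tuple $x=(a_1,\ldots,a_q)$. Under the decomposition of $\tZX{\gm^q}(L)$ into a sum over codimension $q$ points from the beginning of Section \ref{sec:left}, this class is supported at $x$ with coefficient $\langle 1\rangle\in \KMW_0(L,\omega_{L/k}\otimes_L\omega_{L/k}^\vee)\simeq \KMW_0(L)$, the identification coming from the canonical trivialization of the conormal sheaf at a rational point. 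Plugging into the formula for $f_x$ then yields $\Tr_{L/L}(\langle 1\rangle\cdot [a_1,\ldots,a_q])=[a_1]\cdots[a_q]$.

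To conclude in weight $q\geq 1$, I would invoke the fact that such pure symbols generate $\KMW_q(L)$ as an abelian group: every monomial $\eta^r[a_1]\cdots[a_{q+r}]$ of positive total degree can be rewritten as a $\Z$-linear combination of pure symbols using $\eta[a][b]=[ab]-[a]-[b]$ (itself a rearrangement of $[ab]=[a]+[b]+\eta[a][b]$), applied iteratively. Combining this with the additivity of $\theta_L$ and the single-generator computation above forces $\theta_L\circ \Phi_L=\id$ on $\KMW_q(L)$.

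The main obstacle will be the orientation bookkeeping in the calculation on a pure symbol: one has to verify that the graph of $(a_1,\ldots,a_q)$ at a rational point of $\gm^q_L$ indeed represents $\langle 1\rangle$ under the explicit chain of isomorphisms used in Section \ref{sec:left} to write $\tZX{\gm^q}(L)$ as a direct sum of Milnor–Witt $K$-theory groups twisted by $\omega_{L(x)/k}\otimes_L\omega_{L/k}^\vee$. Once this matching of trivializations of the various $\omega$'s is made precise, the rest of the argument is formal.
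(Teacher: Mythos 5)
Your proof follows the same route as the paper's: the paper's entire proof is the one-liner ``It suffices to check that $\theta_L\Phi_L=\id$, which is straightforward,'' and you are simply unwinding that check on pure symbols $[a_1]\cdots[a_q]$ (which indeed generate $\KMW_q(L)$ additively for $q\geq 1$ via iterated use of $\eta[a][b]=[ab]-[a]-[b]$); the orientation bookkeeping you flag is exactly what makes the verification a little delicate, but your reading of the graph correspondence as the class $\langle 1\rangle$ at the rational point and the resulting identity $f_x(\langle 1\rangle)=[a_1,\ldots,a_q]$ is correct. The one place where your account drifts from the paper is for $q\leq 0$: Proposition~\ref{prop:left} only produces $\theta_L$ for $q\geq 1$, and the paper does not ``adapt the construction'' or multiply by $\eta$ to cover negative weights — it instead observes separately (in the proof of the main theorem) that $\Phi_L$ is already an isomorphism in degrees $\leq 0$ because $\HMW^{0,0}(L,\Z)=\KMW_0(L)$ and $\HMW^{p,p}(L,\Z)=W(L)=\KMW_p(L)$ for $p<0$, so split injectivity there is automatic; your alternative is workable but should be made precise before relying on it.
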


\begin{proof}
It suffices to check that $\theta_L\Phi_L=\id$, which is straightforward.
\end{proof}

The following result will play a role in the proof of the main theorem.

\begin{prop}\label{prop:transe}
Let $n\in\Z$ and let $F/L$ be a finite field extension. Then, the following diagram commutes
\[
\xymatrix{\HMW^{n,n}(F,\Z)\ar[d]_-{\Tr_{F/L}}\ar[r] ^-{\theta_F }& \KMW_n(F)\ar[d]^-{\Tr_{F/L}} \\
\HMW^{n,n}(L,\Z)\ar[r]_-{\theta_L} & \KMW_n(L). }
\]
\end{prop}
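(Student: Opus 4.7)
The plan is to evaluate both composites on an explicit cycle representative and to match the outputs using transitivity of the Milnor-Witt transfer together with its projection formula. By Proposition \ref{prop:left} we may assume $n\geq 1$. Recall the identification
\[
\tZX{\gm^n}(F) = \bigoplus_{x\in (\gmk F^n)^{(n)}}\KMW_0\bigl(F(x),\omega_{F(x)/k}\otimes_F\omega_{F/k}^\vee\bigr),
\]
so it is enough to test commutativity on an element $\beta$ supported at a closed point $x$ of $\gmk F^n$ with coordinates $(x_1,\ldots,x_n)\in F(x)^n$. Let $y$ denote the image of $x$ under the projection $\gmk F^n\to\gmk L^n$; then $L(y)\subseteq F(x)$ is a finite extension and the coordinates $(y_1,\ldots,y_n)\in L(y)^n$ of $y$ satisfy $y_i=x_i$ in $F(x)$.

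Going right then down, the definition of $\theta_F$ together with the transitivity of the Milnor-Witt transfer in the tower $L\subseteq F\subseteq F(x)$ gives
\[
\Tr_{F/L}\theta_F(\beta) = \Tr_{F/L}\Tr_{F(x)/F}\bigl(\beta\cdot[x_1,\ldots,x_n]\bigr) = \Tr_{F(x)/L}\bigl(\beta\cdot[x_1,\ldots,x_n]\bigr).
\]

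Going down then right, I would unwind the map $\Tr_{F/L}\colon\tZX{\gm^n}(F)\to\tZX{\gm^n}(L)$ induced by the MW-correspondence $\alpha(f,\chi)$ of Section \ref{sec:projection} attached to $f\colon\spec F\to\spec L$. Combining the base-change and projection formulas in Chow-Witt groups with support (\cite[Proposition~3.2, Corollary~3.5]{Calmes14b}) with the identification of $\chst{n}{x}{\gmk F^n}{\omega_{\gm^n}}$ with the twisted $\KMW_0(F(x),-)$, one obtains that $\Tr_{F/L}(\beta)$ is supported at $y$ and is represented by the twisted Milnor-Witt transfer $\Tr_{F(x)/L(y)}(\beta)$. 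Applying $\theta_L$ then yields
\[
\theta_L\Tr_{F/L}(\beta) = \Tr_{L(y)/L}\bigl(\Tr_{F(x)/L(y)}(\beta)\cdot[y_1,\ldots,y_n]\bigr).
\]
Since $[y_1,\ldots,y_n]$ pulls back to $[x_1,\ldots,x_n]$ along $L(y)\hookrightarrow F(x)$, the Milnor-Witt projection formula for $F(x)/L(y)$ combined with transitivity $\Tr_{L(y)/L}\Tr_{F(x)/L(y)}=\Tr_{F(x)/L}$ shows this agrees with the first composite.

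The main obstacle is the cycle-level identification in the previous paragraph, namely that $\alpha(f,\chi)$ acts on $\tZX{\gm^n}$ componentwise as the twisted Milnor-Witt transfer $\Tr_{F(x)/L(y)}$. In the separable (equivalently étale) case this is immediate from the construction and the canonical trivialization of $\omega_{F/L}$. In the purely inseparable case one has to appeal to Example \ref{ex:inseparable}: the factor $p_\epsilon$ produced there on the Chow-Witt side is precisely the scalar appearing in the Milnor-Witt transfer $\Tr_{F(x)/L(y)}$ for purely inseparable extensions, so the two descriptions still agree. A general finite extension factors as a separable extension followed by a purely inseparable one, reducing to these two cases.
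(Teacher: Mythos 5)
Your proof follows essentially the same strategy as the paper: both compute $\theta$ on a cycle representative, then match the two composites using transitivity of the Milnor--Witt transfer and the Milnor--Witt projection formula applied to the units $[a_1,\ldots,a_n]$. The paper formulates the commutativity for smooth connected schemes $X\to Y$ and then specializes to fields, while you argue directly over fields; you also make explicit the intermediate residue field $L(y)$ (the image point of $x$), which the paper leaves implicit by writing the components of $\beta\circ\alpha(f,\chi)$ as indexed by the $T_i$ twisted into $\omega_{k(T_i)/k(Y)}$.

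The one place your write-up drifts off course is the ``main obstacle'' paragraph. The identification you need --- that composing with $\alpha(f,\chi)$ takes the component at $x$ to the component at $y$ via the Milnor--Witt transfer $\Tr_{F(x)/L(y)}$ --- is not established by a separable/inseparable dichotomy; it is simply the description, at generic points, of the finite push-forward in Chow--Witt groups that enters the composition of MW-correspondences, combined with the base-change and projection formulas of \cite[Proposition~3.2, Corollary~3.5]{Calmes14b} (which is exactly how the paper argues). Example \ref{ex:inseparable} does not really help here: it computes the \emph{endomorphism} $f\circ\alpha(f,\chi)$ of $X$ (i.e.\ $f^*f_*$), not the componentwise action of $\alpha(f,\chi)$ on $\tZX{\gm^n}$, so citing it to justify the inseparable case is off-target. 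If you replace that paragraph by a direct appeal to the definition of the Chow--Witt push-forward and the base-change/projection formulas, the argument is complete and matches the paper's.
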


\begin{proof}
Let $X$ be a smooth connected scheme and let $\beta\in \cor k(X,\gm^{\times n})$ be a finite MW-correspondence with support $T$ (see \cite[Definition 4.7]{Calmes14b} for the notion of support). Each connected component $T_i$ of $T$ has a fraction field $k(T_i)$ which is a finite extension of $k(X)$ and, arguing as in the beginning of Section \ref{sec:left}, we find that $\beta$ can be seen as an element of 
\[
\bigoplus_{i} \KMW_0(k(T_i),\omega_{k(T_i)/k}\otimes \omega_{k(X)/k}^\vee)
\]
Now, the morphism $T_i\subset X\times \gm^{\times n}\to \gm^{\times n}$ gives invertible global sections $a_1,\ldots,a_n$ and we define a map
\[
\theta_X:\cor k(X,\gm^{\times n})\to \KMW_n(k(X))
\]
by $\beta\mapsto \sum \Tr_{k(T_i)/k(X)}(\beta_i[a_1,\ldots,a_n])$, where $\beta_i$ is the component of $\beta$ in the group $\KMW_0(k(T_i),\omega_{k(T_i)/k(X)})$. This map is easily seen to be a homomorphism, and its limit at $k(X)$ is the morphism defined at the beginning of Section \ref{sec:left}.

Let now $X$ and $Y$ be smooth connected schemes over $k$, $f:X\to Y$ be a finite morphism and $\chi:\OO_X\to \omega_f$ be an isomorphism inducing a finite MW-correspondence $\alpha(f,\chi):Y\to X$ as in Section \ref{sec:projection}. We claim that the diagram
\[
\xymatrix{\cor k(X,\gm^{\times n})\ar[r]^-{\theta_X}\ar[d]_-{\circ\alpha(f,\chi)} & \KMW_n(k(X))\ar[d]^-{\Tr_{k(X)/k(Y)}} \\
\cor k(Y,\gm^{\times n})\ar[r]_-{\theta_Y} & \KMW_n(k(Y)),}
\]
where the right arrow is obtained using $\chi$, commutes. If $\beta$ is as above, we have 
\[
\Tr_{k(X)/k(Y)}(\theta_X(\beta))=\sum \Tr_{k(X)/k(Y)}( \Tr_{k(T_i)/k(X)}(\beta_i[a_1,\ldots,a_n]))
\]
and the latter is equal to $\sum  \Tr_{k(T_i)/k(Y)}(\beta_i[a_1,\ldots,a_n])$ by functoriality of the transfers. On the other hand, the isomorphism $\chi:\OO_X\to \omega_f$ can be seen as an element in $\sKMW_0(X,\omega_f)$, yielding an element of $\KMW_0(k(X),\omega_f)$ that we still denote by $\chi$. The image of $\beta\circ\alpha(f,\chi)$ can be seen as the element $\beta\cdot \chi$ of   
\[
\bigoplus_{i} \KMW_0(k(T_i),\omega_{k(T_i)/k(Y)})
\] 
where we have used the isomorphism 
\[
\omega_{k(T_i)/k(X)}\otimes \omega_f= \omega_{k(T_i)/k(X)}\otimes \omega_{k(X)/k(Y)}\simeq \omega_{k(T_i)/k(Y)}.
\]
It is now clear that $\theta_Y(\beta\circ\alpha(f,\chi))=\sum  \Tr_{k(T_i)/k(Y)}(\beta_i[a_1,\ldots,a_n])$ and the result follows. 
\end{proof}


\subsection{Proof of the main theorem}

In this section we prove our main theorem, namely that the homomorphism 
\[
\Phi_L:\bigoplus_{n\in\Z} \KMW_n(L) \to \bigoplus_{n\in\Z} \HMW^{n,n}(L,\Z)
\]
is an isomorphism. We first observe that $\Phi_L$ is an isomorphism in degrees $\leq 0$. In degree $0$, we indeed know from \cite[\S 6]{Calmes14b} that both sides are $\KMW_0(L)$. Next, \cite[Lemma 6.0.1]{Fasel16b} yields
\[
\Phi_L(\langle a\rangle)=\Phi_L(1+\eta [a])=1+s(\eta)s(a)=\langle a\rangle.
\]
It follows $\Phi_L$ is a homomorphism of graded $\KMW_0(L)$-algebras and the result in degrees $<0$ follows then from the fact that $\HMW^{p,p}(L,\Z)=W(L)=\KMW_{-p}(L)$ by \cite[\S 6]{Calmes14b} and \cite[Proposition 4.1.2]{Deglise16}.

We now prove the result in positive degrees, starting with $n=1$. Recall that we know from Corollary \ref{cor:splitinjective} that $\Phi_L$ is split injective, and that it therefore suffices to prove that it is surjective to conclude.

For any $d,n\geq 1$ and any field extension $L/k$ let $M_n^{(d)}(L)\subset \cor k(L,\gm^{\times n})$ be the subgroup of correspondences whose support is a finite union of field extensions $E/L$ of degree $\leq d$ (see \cite[Definition 4.7]{Calmes14b} for the notion of support of a correspondence). Let $\HMW^{n,n}(L,\Z)^{(d)}\subset \HMW^{n,n}(L,\Z)$ be the image of $M_n^{(d)}(L)$ under the surjective homomorphism
\[
\cor k(L,\gm^{\times n})\to \HMW^{n,n}(L,\Z).
\]
Observe that 
\[
\HMW^{n,n}(L,\Z)^{(d)}\subset \HMW^{n,n}(L,\Z)^{(d+1)} \quad \text{and} \quad \HMW^{n,n}(L,\Z)=\cup_{d\in \N}\HMW^{n,n}(L,\Z)^{(d)}.
\]
\begin{lem}
The subgroup $\HMW^{n,n}(L,\Z)^{(1)}\subset \HMW^{n,n}(L,\Z)$ is the image of the homomorphism
\[
\Phi_L:\KMW_n(L)\to \HMW^{n,n}(L,\Z).
\]
\end{lem}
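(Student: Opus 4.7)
The plan is to verify the two inclusions separately, using the explicit description of $\cor k(L,\gm^{\times n})$ recalled at the beginning of Section~\ref{sec:left} together with the left inverse $\theta_L$ from Proposition~\ref{prop:left}.

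For the inclusion $\mathrm{image}(\Phi_L) \subseteq \HMW^{n,n}(L,\Z)^{(1)}$, I would first observe that as a $\KMW_0(L)$-module, $\KMW_n(L)$ is generated (for $n \geq 1$) by the symbols $\alpha \cdot [a_1,\ldots,a_n]$ with $\alpha \in \KMW_0(L)$ and $a_i \in L^\times$. Since $\Phi_L$ is a graded $\KMW_0(L)$-algebra morphism which sends $[a]$ to the class of the graph correspondence $\Gamma_a$ (by its definition in Section~\ref{sec:homomorphism}), and since the ring structure on $\HMW^{*,*}$ converts tensor products of graphs (through the diagonal) into graphs of tuples, we have
$$
\Phi_L(\alpha \cdot [a_1,\ldots,a_n]) = [\alpha \cdot \Gamma_{(a_1,\ldots,a_n)}],
$$
the class of a correspondence supported at the single $L$-rational point $(a_1,\ldots,a_n) \in \gm^{\times n}(L)$. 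Such correspondences lie in $M_n^{(1)}(L)$, so their classes lie in $\HMW^{n,n}(L,\Z)^{(1)}$.

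For the reverse inclusion, I would pick $y \in \HMW^{n,n}(L,\Z)^{(1)}$ and a lift $\beta \in M_n^{(1)}(L)$. Every irreducible component of the support of $\beta$ is an $L$-rational closed point of $\gm^{\times n}$; using the canonical trivialization of the twist $\omega_{L(x)/k}\otimes_L \omega_{L/k}^\vee$ for such degree-$1$ points inside the decomposition
$$
\tZX{\gm^n}(L) = \bigoplus_{x \in (\gm^n_L)^{(n)}} \KMW_0(L(x),\omega_{L(x)/k} \otimes_L \omega_{L/k}^\vee),
$$
we can write $\beta = \sum_i \alpha_i \cdot \Gamma_{x_i}$ with $\alpha_i \in \KMW_0(L)$ and $x_i = (a_{i,1},\ldots,a_{i,n}) \in \gm^{\times n}(L)$. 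Applying $\theta_L$ yields $\theta_L(y) = \sum_i \alpha_i \cdot [a_{i,1},\ldots,a_{i,n}]$ in $\KMW_n(L)$, and applying $\Phi_L$ to this element gives, by the identification of the previous paragraph, $\Phi_L(\theta_L(y)) = \sum_i [\alpha_i \cdot \Gamma_{x_i}] = [\beta] = y$. Hence $y$ lies in the image of $\Phi_L$.

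The only substantive point is the identification $\Phi_L(\alpha \cdot [a_1,\ldots,a_n]) = [\alpha \cdot \Gamma_{(a_1,\ldots,a_n)}]$, which I expect to be the main (though essentially formal) verification. It rests on three compatibilities, all already in place: $\Phi_L$ is a ring homomorphism of $\KMW_0(L)$-algebras; the external product of one-variable graphs $\Gamma_{a_1}\otimes\cdots\otimes\Gamma_{a_n}$, composed with the diagonal on $\spec L$, produces the $n$-variable graph $\Gamma_{(a_1,\ldots,a_n)}$; and the $\KMW_0(L)$-module structure on the Chow--Witt groups underlying $\cor k(L,\gm^{\times n})$ is compatible with the module structure on $\HMW^{*,*}(L,\Z)$ through the quotient map $\cor k(L,\gm^{\times n}) \to \HMW^{n,n}(L,\Z)$. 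Once these are unwound, the proof is essentially bookkeeping on supports.
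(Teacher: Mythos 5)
Your proof is correct and follows essentially the same route as the paper's: both inclusions reduce to the identification $\Phi_L(\phi\cdot[a_1,\ldots,a_n]) = [\phi\cdot\Gamma_{(a_1,\ldots,a_n)}]$, with the paper simply stating it directly and you fleshing out the ring/module compatibilities. The detour through $\theta_L$ in the reverse inclusion is harmless but unnecessary -- once you have written a degree-$1$ lift $\beta$ as $\sum_i\alpha_i\cdot\Gamma_{x_i}$, the same identification already exhibits $y$ as $\Phi_L\bigl(\sum_i\alpha_i\cdot[a_{i,1},\ldots,a_{i,n}]\bigr)$ without reference to the left inverse.
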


\begin{proof}
By definition, observe that the homomorphism $\KMW_n(L)\to \HMW^{n,n}(L,\Z)$ factors through $\HMW^{n,n}(L,\Z)^{(1)}$. Let then $\alpha\in \HMW^{n,n}(L,\Z)^{(1)}$. We may suppose that $\alpha$ is the image under the homomorphism $\cor k(L,\gm^{\times n})\to \HMW^{n,n}(L,\Z)$ of a correspondence $a$ supported on a field extension $E/L$ of degree $1$, i.e. $E=L$. It follows that $a$ is determined by a form $\phi\in \KMW_0(L)$ and a $n$-uple $a_1,\ldots,a_n$ of elements of $L$. This is precisely the image of $\Phi_L(\phi\cdot [a_1,\ldots,a_n])$ under the homomorphism $\KMW_n(L)\to \HMW^{n,n}(L,\Z)$.
\end{proof}

\begin{prop}\label{prop:inductionstep}
For any $d\geq 2$, we have $\HMW^{1,1}(L,\Z)^{(d)}\subset \HMW^{1,1}(L,\Z)^{(d-1)}$.
\end{prop}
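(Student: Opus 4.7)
The plan is to construct an explicit $\A^1$-homotopy that trades a degree $d$ correspondence for one supported in strictly lower degrees. Let $\alpha \in \HMW^{1,1}(L,\Z)^{(d)}$ be represented by a correspondence $a \in M_1^{(d)}(L)$; we may assume the support of $a$ is a single closed point $x$ of $\gmk L = \spec{L[t,t^{-1}]}$ with residue field $E := \kappa(x)$ of degree exactly $d$ over $L$ (otherwise $a$ already lies in $M_1^{(d-1)}(L)$). The point $x$ is cut out by an irreducible polynomial $p(t) \in L[t]$ of degree $d$ with $p(0) \neq 0$, so $E = L[t]/p(t)$, and by the description of Section~\ref{sec:left} the correspondence $a$ corresponds to a class $\psi \in \KMW_0(E, \omega_{E/L})$.

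Since $k$, and hence $L$, is infinite, I would choose $c \in L^\times$ with $p(c) \neq 0$ and set
\[
P(s,t) := p(t) - \frac{s \cdot p(c)}{c} \cdot t \in L[s,t], \qquad H := V(P) \subset \A^1_L \times \gmk L.
\]
Since $P$ is monic of degree $d$ in $t$, the projection $H \to \A^1_L$ is finite and flat of degree $d$; since $P(s,0) = p(0) \neq 0$, we have $H \subset \A^1_L \times \gmk L$; and since $\gcd_{L[t]}(p(t),t) = 1$, $P$ is irreducible in $L[s,t]$, so $H$ is integral and admissible as a support for a correspondence $\A^1_L \to \gmk L$. The key geometric point is that $P(0,t) = p(t)$, so $H \cap \{s=0\} = \spec{E} = \{x\}$ scheme-theoretically, while $P(1,c) = 0$ yields a factorization $P(1,t) = (t-c)Q(t)$ with $Q \in L[t]$ of degree $d-1$ and $Q(0) = -p(0)/c \neq 0$; thus $H \cap \{s=1\}$ is supported on $\{c\} \cup V(Q) \subset \gmk L$, a union of closed points each of degree at most $d-1$ over $L$.

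The next step is to lift $\psi$ to an element $\Psi \in \KMW_0(k(H), \omega_{k(H)/L(s)})$ whose Gysin pullback along $g_0 : \gmk L \hookrightarrow \A^1_L \times \gmk L$ recovers $\psi$. Since $\KMW_0 = \GW$ is additively generated by symbols $\langle u \rangle$ and every unit $u \in E^\times$ lifts to a unit of $k(H)$ specializing to $u$ at $s=0$, such a $\Psi$ exists after choosing compatible trivializations of the twisting lines via the conormal sequence for $\{s=0\} \subset \A^1_L \times \gmk L$ restricted to $H$. The resulting correspondence $(H, \Psi) \in \cor k(\A^1_L, \gmk L)$ then satisfies $g_0^*(H,\Psi) = a$ and $g_1^*(H,\Psi) \in M_1^{(d-1)}(L)$. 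Since $\HMW^{1,1}(L,\Z)$ is the cokernel of $g_0^* - g_1^*$ by Proposition~\ref{prop:left}, we obtain $a \equiv g_1^*(H,\Psi) \pmod{\A^1\text{-boundaries}}$, and therefore $\alpha \in \HMW^{1,1}(L,\Z)^{(d-1)}$.

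The main obstacle will be the precise verification that $g_0^*(H,\Psi) = a$, which requires tracking the canonical-bundle twists along the conormal sequence of $\{s=0\}$ restricted to $H$ versus that of $\{x\} \subset \gmk L$. When $E/L$ is separable, the intersection is transverse in the Chow-Witt sense and the identification is direct. When $E/L$ is purely inseparable --- a situation that can arise for non-perfect $L$, even though $k$ is perfect --- the derivative $p'(t)$ vanishes and transversality fails; this case is handled via the computation of the scalar factor in Example~\ref{ex:inseparable}, which dictates a corresponding adjustment of the lift $\Psi$ by a unit. Since this adjustment is invisible to the support of $g_1^*(H,\Psi)$, the conclusion holds in both cases.
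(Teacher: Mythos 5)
Your homotopy-of-supports is in the same spirit as the paper's construction, and $P(s,t)=p(t)-(sp(c)/c)\,t$ is a perfectly good admissible family whose fibre at $s=0$ is $\spec E$ and whose fibre at $s=1$ has components of degree $\leq d-1$. But the step ``lift $\psi$ to $\Psi$'' contains a real gap, and it is exactly the gap that the paper's two preliminary reductions are designed to close.

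For $(H,\Psi)$ to define an element of $\cor k(\A^1_L,\gm)$, the class $\Psi$ must lie in $\chst{1}{H}{\A^1_L\times\gm}{\omega}$. Since $H$ has codimension $1$, the Rost--Schmid complex with supports on $H$ has no term in cohomological degree $0$, so this group is the \emph{kernel} of the residue map out of $\KMW_0(k(H),\omega)$ to the closed points of $H$: $\Psi$ has to be unramified everywhere on $H$, not merely a class at the generic point. Your $H$ is cut out by an equation linear in $s$, so in fact $H\cong\gm$ via the $t$-coordinate, and the group of admissible $\Psi$ is essentially the unramified $\GW$-classes on $\gm$; their restrictions to $\GW(E,\omega_{E/L})$ form a small subgroup (roughly the one generated by $\GW(L)$ and $\langle b\rangle$), so an arbitrary $\psi$ has no lift. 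Your justification --- that $\GW$ is additively generated by $\langle u\rangle$ and that units of $E$ lift to $k(H)$ --- only produces a \emph{rational} lift; a lifted unit typically has zeros and poles on $H$, and $\langle \tilde u\rangle$ then ramifies.

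The paper avoids this by first reducing, via the preceding lemma and the additive generation of $\KMW_1(E)$ by symbols $[b]$, to correspondences of the form $\Tr_{E/L}(s(b))$, i.e.\ with form part $\langle 1\rangle\cdot\xi$. It then produces the unramified lift \emph{explicitly} as the total residue of $\langle\omega\rangle[F]\,dt$ in the Rost--Schmid complex of $\A^1_L\times\gm$ (with $F(u,t)=(1-u)p+uf$), which is automatically a cocycle since $\partial\circ\partial=0$; the residues at $u=0,1$ can then be read off, and the extra components $V(q_j)$ coming from $\omega$ have degree $\leq d-1$ so cause no harm. Without the reduction to $s(b)$, there is no analogous explicit construction, and existence of $\Psi$ is simply not available.

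A smaller point: your treatment of the inseparable case is misplaced. Transversality of $H\cap\{s=0\}$ holds automatically, because $\partial P/\partial s=-p(c)t/c$ is a unit on $\gm$; separability of $E/L$ plays no role there. Where inseparability actually enters is in the definition of the canonical orientation $\xi$ of $\omega_{E/L}$, which is why the paper uses Morel's modified derivative $\omega=p_0'(t^{l^m})$ instead of $p'(t)$.
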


\begin{proof}
By definition, $\HMW^{1,1}(L,\Z)^{(d)}$ is generated by correspondences whose supports are field extensions $E/L$ of degree at most $d$. Such correspondences are determined by an element $a\in E^\times$ given by the composite $\spec E\to \gmk L\to \gm$ together with a form $\phi\in \KMW_0(E,\omega_{E/L})$ given by the isomorphism 
\[
\KMW_0(E,\omega_{E/L})\to \chst 1{\spec E}{\gmk L}{\omega_{\gmk L/L}}.
\]
We denote this correspondence by the pair $(a,\phi)$. Recall from \cite[Lemma 2.4]{Calmes14b} that there is a canonical orientation $\xi$ of $\omega_{E/L}$ and thus a canonical element $\chi\in \cor k(\spec L,\spec E)$ yielding the transfer map 
\[
\Tr_{E/L}:\cor k(\spec E,\gm)\to \cor k(\spec L,\gm)
\]
which is just the composition with $\chi$ (\cite[Example 4.17]{Calmes14b}). Now $\phi=\psi\cdot \xi$ for $\psi\in \KMW_0(E)$, and it is straightforward to check that the Chow-Witt correspondence $(a,\psi)$ in $\cor k(\spec E,\gm)$ determined by $a\in E^\times$ and $\psi\in \KMW_0(E)$ satisfies $\Tr_{E/L}(a,\psi)=(a,\phi)$. Now $(a,\psi)\in \HMW^{1,1}(E,\Z)^{(1)}$ and therefore belongs to the image of the homomorphism $\KMW_1(E)\to \HMW^{1,1}(E,\Z)$. There exists thus $a_1,\ldots,a_n,b_1,\ldots,b_m\in E^\times$ (possibly equal) such that $(a,\psi)=\sum s(a_i)-\sum s(b_j)$. To prove the lemma, it suffices then to show that $\Tr_{E/L}(s(b))\in \HMW^{1,1}(L,\Z)^{(d-1)}$ for any $b\in E^\times$. 

Let thus $b\in E^\times$. By definition, $s(b)\in \H^{1,1}(E,\tZ)$ is the class of the correspondence $\tilde\gamma(b)$ associated to the morphism of schemes $\spec E\to \gm$ corresponding to $b$. If $F(b)\subset E$ is a proper subfield, we see that $\Tr_{E/L}(s(b))\in \H^{1,1}(F,\tZ)^{(d-1)}$, and we may thus suppose that the minimal polynomial $p$ of $b$ over $F$ is of degree $d$. By definition, $\Tr_{E/L}(s(b))$ is then represented by the correspondence associated to the pair $(b,\langle 1\rangle\cdot \xi)$. Consider the total residue homomorphism (twisted by the vector space $\omega_{F[t]/k}\otimes \omega_{F/k}^\vee$)
\begin{equation}\label{eq:residue}
\partial:\KMW_1(F(t),\omega_{F(t)/k}\otimes \omega_{F/k}^\vee)\to \bigoplus_{x\in \gmk F^{(1)}} \KMW_0(F(x),(\mathfrak m_x/\mathfrak m_x^2)^\vee \otimes _{F[t]} \omega_{F[t]/k}\otimes \omega_{F/k}^\vee)
\end{equation}
where $\mathfrak m_x$ is the maximal ideal corresponding to $x$. Before working further with this homomorphism, we first identify $(\mathfrak m_x/\mathfrak m_x^2)^\vee \otimes _{F[t]} \omega_{F[t]/k}\otimes \omega_{F/k}^\vee$. Consider the canonical exact sequence of $F(x)$-vector spaces
\[
\mathfrak m_x/\mathfrak m_x^2\to \Omega_{F[t]/k}\otimes_{F[t]} F(x) \to \Omega_{F(x)/k}\to 0.
\]
A comparison of the dimensions shows that the sequence is also left exact (use the fact that $F(x)$ is the localization of a smooth scheme of dimension $d$ over the perfect field $k$), and we thus get a canonical isomorphism
\[
\omega_{F[t]/k}\otimes_{F[t]} F(x)\simeq \mathfrak m_x/\mathfrak m_x^2 \otimes_{F(x)}\omega_{F(x)/k}.
\]
It follows that
\[
(\mathfrak m_x/\mathfrak m_x^2)^\vee \otimes _{F[t]} \omega_{F[t]/k}\otimes \omega_{F/k}^\vee\simeq \omega_{F(x)/k}\otimes_F \omega_{F/k}^\vee.
\] 
We can thus rewrite the residue homomorphism \eqref{eq:residue} as a homomorphism
\[
\partial:\KMW_1(F(t),\omega_{F(t)/k}\otimes \omega_{F/k}^\vee)\to \bigoplus_{x\in (\A^1_F\setminus 0)^{(1)}} \KMW_0(F(x),\omega_{F(x)/k}\otimes_F \omega_{F/k}^\vee)
\]
Moreover, an easy dimension count shows that the canonical exact sequence
\[
\Omega_{F/k}\otimes F[t]\to \Omega_{F[t]/k}\to \Omega_{F[t]/F}\to 0
\]
is also exact on the left, yielding a canonical isomorphism $\omega_{F(t)/k}\simeq \omega_{F/k}\otimes \omega_{F(t)/F}$ and thus a canonical isomorphism $\omega_{F/k}^\vee\otimes \omega_{F(t)/k}\simeq \omega_{F(t)/F}$.
If $n$ is the transcendance degree of $F$ over $k$, we see that the canonical isomorphism 
\[
\omega_{F/k}^\vee\otimes \omega_{F(t)/k}\simeq\omega_{F(t)/k}\otimes \omega_{F/k}^\vee
\] 
is equal to $(-1)^{n(n+1)}$-times the switch isomorphism, i.e. is equal to the switch isomorphism. Altogether, the residue homomorphism reads as 
\[
\partial:\KMW_1(F(t),\omega_{F(t)/F})\to \bigoplus_{x\in (\A^1_F\setminus 0)^{(1)}} \KMW_0(F(x),\omega_{F(x)/k}\otimes_F \omega_{F/k}^\vee).
\]

Let now $p(t)\in F[t]$ be the minimal polynomial of $b$ over $F$.
Following \cite[Definition 4.26]{Morel08} (or \cite[\S 2]{Calmes14b}), write $p(t)=p_0(t^{l^m})$ with $p_0$ separable and set $\omega=p_0^\prime(t^{l^m})\in F[t]$ if $\mathrm{char}(k)=l$. If $\mathrm{char}(k)=0$, set $\omega=p^\prime(t)$. It is easy to see that the element $\langle \omega\rangle [p]\cdot dt$ of $\KMW_1(F(t),\omega_{F(t)/F})$ ramifies in $b\in \gmk F^{(1)}$ and on (possibly) other points corresponding to field extensions of degree $\leq d-1$. Moreover, the residue at $b$ is exactly $\langle 1\rangle \cdot \xi$, where $\xi$ is the canonical orientation of $\omega_{F(b)/F}$.

Write the minimal polynomial $p(t)\in F[t]$ of $b$ as $p=\sum_{i=0}^d \lambda_i t^i$ with $\lambda_d=1$ and $\lambda_0\in F^\times$, and decompose $\omega=c\prod_{j=1}^n q_j^{m_j}$, where $c\in F^\times$ and $q_j\in F[t]$ are irreducible monic polynomials. Let $f=(t-1)^{d-1}(t-(-1)^{d}\lambda_0) \in F[t]$. Observe that $f$ is monic and satisfies $f(0)=p(0)$. Let $F(u,t)=(1-u)p+uf$. Since $f$ and $p$ are monic and have the same constant terms, it follows that $F(u,t)=t^d+\ldots+\lambda_0$ and therefore $F$ defines an element of $\Adm(\A^1_F,\gm)$. For the same reason, every $q_j$ (seen as a polynomial in $F[u,t]$ constant in $u$) defines an element in $\Adm(\A^1_F,\gm)$. The image of $\langle\omega\rangle [F]\cdot dt \in \KMW_1(F(u,t),\omega_{F(u,t)/F(u)})$ under the residue homomorphism
\[
\partial:\KMW_1(F(u,t),\omega_{F(u,t)/F(u)})\to \bigoplus_{x\in (\A^1_F\times_k \gm)^{(1)}} \KMW_0(F(x),(\mathfrak m_x/\mathfrak m_x^2)^\vee\otimes _{F[t]}\omega_{F[u,t]/F[u]})
\]
is supported on the vanishing locus of $F$ and the $g_j$, and it follows that it defines a finite Chow-Witt correspondence $\alpha$ in $\cor k(\A^1_F,\gm)$. The evaluation $\alpha(0)$ of $\alpha$ at $u=0$ consists from $\langle 1\rangle \cdot \xi$ and correspondences supported on the vanishing locus of the $q_j$, while $\alpha(1)$ is supported on the vanishing locus of $f$ and the $q_j$. The class of $\langle 1\rangle \cdot \xi$ is then an element of $\HMW^{1,1}(L,\Z)^{(d-1)}$.
\end{proof}

\begin{coro}\label{cor:degree1}
The homomorphism 
\[
\Phi_L: \KMW_1(L)\to \HMW^{1,1}(F,\Z)
\]
is an isomorphism for any finitely generated field extension $F/k$.
\end{coro}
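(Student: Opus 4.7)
The plan is to combine the split injectivity already established with a surjectivity argument built from the filtration by $\HMW^{1,1}(F,\Z)^{(d)}$ and the descent step just proved.

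First, by Corollary \ref{cor:splitinjective}, the map $\Phi_F : \KMW_1(F) \to \HMW^{1,1}(F,\Z)$ is split injective (with splitting $\theta_F$ from Proposition \ref{prop:left}). Hence it suffices to show surjectivity.

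For surjectivity, I would use the observation stated just before the preceding lemma:
\[
\HMW^{1,1}(F,\Z) \;=\; \bigcup_{d\geq 1} \HMW^{1,1}(F,\Z)^{(d)}.
\]
Any element $\alpha \in \HMW^{1,1}(F,\Z)$ therefore lies in $\HMW^{1,1}(F,\Z)^{(d)}$ for some $d\geq 1$. The lemma preceding Proposition \ref{prop:inductionstep} identifies $\HMW^{1,1}(F,\Z)^{(1)}$ with the image of $\Phi_F$, so it is enough to show that $\HMW^{1,1}(F,\Z)^{(d)}$ is contained in $\HMW^{1,1}(F,\Z)^{(1)}$ for every $d\geq 1$. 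This follows by a straightforward descending induction on $d$ using Proposition \ref{prop:inductionstep}: the base case $d=1$ is trivial, and the inductive step $\HMW^{1,1}(F,\Z)^{(d)} \subset \HMW^{1,1}(F,\Z)^{(d-1)}$ is exactly the content of Proposition \ref{prop:inductionstep}.

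Putting these two facts together, every $\alpha \in \HMW^{1,1}(F,\Z)$ lies in the image of $\Phi_F$, so $\Phi_F$ is surjective; combined with the split injectivity from Corollary \ref{cor:splitinjective}, this gives that $\Phi_F$ is an isomorphism. The substantive content is entirely absorbed into Proposition \ref{prop:inductionstep}; the corollary itself is just the formal packaging of that proposition with the $d=1$ identification and the split injectivity, so there is no real obstacle to overcome here.
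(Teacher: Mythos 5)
Your proposal is correct and follows essentially the same approach as the paper: split injectivity from Corollary \ref{cor:splitinjective}, the identification of $\HMW^{1,1}(F,\Z)^{(1)}$ with the image of $\Phi_F$ via the preceding lemma, and the collapse $\HMW^{1,1}(F,\Z)=\HMW^{1,1}(F,\Z)^{(1)}$ by iterating Proposition \ref{prop:inductionstep}. You have simply made explicit the descending induction and the exhaustion of $\HMW^{1,1}(F,\Z)$ by the filtration, which the paper leaves implicit.
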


\begin{proof}
We know that the homomorphism is (split) injective. The above proposition shows that $\HMW^{1,1}(F,\Z)=\HMW^{1,1}(F,\Z)^{(1)}$ and the latter is the image of $\KMW_1(L)$ under $\Phi_L$. It follows that $\Phi_L$ is surjective.
\end{proof}

We can now prove that $\theta$ respects transfers following \cite[Lemma 5.11]{Mazza06} and \cite[Lemma 9.5]{Neshitov14}.

\begin{thm}\label{thm:respect}
Let $n\in\N$ and let $F/L$ be a finite field extension. Then the following diagram commutes
\[
\xymatrix{\KMW_n(F)\ar[r]^-{\Phi_F}\ar[d]_-{\Tr_{F/L}} & \HMW^{n,n}(F,\Z)\ar[d]^-{\Tr_{F/L}} \\
\KMW_n(L)\ar[r]_-{\Phi_L} & \HMW^{n,n}(L,\Z). }
\]
\end{thm}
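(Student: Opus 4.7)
The plan is to adapt the Bass--Tate reduction strategy of \cite[Lemma 5.11]{Mazza06} to the Milnor--Witt setting, following \cite[Lemma 9.5]{Neshitov14}. As a first reduction I would write $F$ as a tower of simple extensions and invoke functoriality of transfers on both sides (on the $\HMW^{n,n}$ side, this follows from the behaviour of $\alpha(f,\chi)$ under composition of finite morphisms), so it suffices to treat a monogenic extension $F = L(a)$. I would then proceed by induction on $n$.

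The base cases $n \leq 1$ follow from the isomorphism $\Phi_L$ in those degrees (the opening of this section for $n \leq 0$, Corollary \ref{cor:degree1} for $n=1$) together with Proposition \ref{prop:transe}: applying $\theta_L$ to both members of the desired equality gives
\[
\theta_L \Phi_L \Tr_{F/L}(x) = \Tr_{F/L}(x) = \Tr_{F/L}\,\theta_F \Phi_F(x) = \theta_L \Tr_{F/L} \Phi_F(x),
\]
whence the identity since $\theta_L$ is injective in these degrees.

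For the inductive step, I would invoke the Milnor--Witt analog of the Bass--Tate generation lemma: $\KMW_n(L(a))$ is generated as an abelian group by symbols of the form $\omega \cdot [p_1(a),\ldots,p_k(a)] \cdot [b_{k+1},\ldots,b_n]$, with $\omega \in \KMW_0(L(a))$, the $p_i \in L[t]$ monic irreducible of positive degree, and the $b_j \in L^\times$. When $k<n$ the projection formula (Theorem \ref{thm:projection}), applied to the pulled-back factor $[b_{k+1},\ldots,b_n]$, reduces compatibility on such a symbol to compatibility in strictly smaller degree, hence to the inductive hypothesis. In the remaining case $k=n$, I would mimic the proof of Proposition \ref{prop:inductionstep} and deform each $[p_i(a)]$ via the explicit admissible $\A^1$-homotopy based on the family $F_i(u,t)=(1-u)p_i(t)+u\,(t-1)^{d_i-1}(t-(-1)^{d_i}\lambda_0^{(i)})$, thereby replacing $[p_i(a)]$ by a sum of symbols involving linear factors $[a-c]$ with $c\in L$ and factors of strictly smaller degree in $a$; iterating this deformation reduces everything to the situation with at least one $L^\times$-entry, which is already handled. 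The purely inseparable step in positive characteristic is taken care of by Example \ref{ex:inseparable}, whose factor $p_\epsilon$ matches precisely the corresponding factor in Morel's description of the inseparable MW-transfer.

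The hardest step will be the Bass--Tate reduction in the MW setting: the $\eta$-correction terms in the relation $[xy]=[x]+[y]+\eta[x][y]$ introduce contributions absent from the Milnor-K-theoretic argument of \cite[Lemma 5.11]{Mazza06}, and one must verify cycle-by-cycle that the $\A^1$-homotopy on the MW-correspondence side produces exactly the same expansion as Morel's residue-based transfer on $\KMW$, essentially refining the computation already carried out in the proof of Proposition \ref{prop:inductionstep}.
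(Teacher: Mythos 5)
Your proposal takes a genuinely different route from the paper, and there is a real gap in it. The paper does \emph{not} attempt to reduce to monogenic extensions and run an induction on $n$ via a Bass--Tate generation statement. Instead it first reduces to prime degree $[F:L]=p$, then passes to the maximal prime-to-$p$ extension $L'$ of $L$ precisely so that it can invoke Morel's Lemma 3.25, which gives the needed generation of $\KMW_n$ by symbols $\eta^m[a_1,a_2,\ldots,a_{n+m}]$ with $a_1\in F^\times$ and $a_i\in L^\times$ for $i\geq 2$ \emph{under the hypothesis that $L$ has no prime-to-$p$ extensions}. The projection formula (Theorem \ref{thm:projection}) then handles that case directly with no induction on $n$ at all. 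To descend back from $L'$ to $L$, the paper shows that the defect $t(\alpha)=(\Tr_{F/L}\Phi_F-\Phi_L\Tr_{F/L})(\alpha)$ is killed by some $m_\epsilon$ with $m$ prime to $p$ (using the existence of $b\in E^\times$ with $f_*(\langle b\rangle)=m_\epsilon$), and is also killed by $p_\epsilon$ once one shows $f^*(t(\alpha))=0$; the latter is established by splitting into the purely inseparable case (Example \ref{ex:inseparable}, $f^*f_*=p_\epsilon$) and the separable case ($F\otimes_L F=\prod_i F_i$ with $[F_i:F]<p$, then induction on the degree of the extension, not on $n$). Since $\gcd(m,p)=1$, $t(\alpha)=0$.

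The gap in your proposal is the ``Milnor--Witt analog of the Bass--Tate generation lemma'' in the form you state it. No such statement is established (you yourself flag it as the hardest step), and Morel's Lemma 3.25 --- the only generation result actually available --- is strictly weaker: it requires $L$ to have no prime-to-$p$ extensions, which is exactly why the paper introduces $L'$. Moreover, your plan to repair the $k=n$ case by mimicking the $\A^1$-homotopy of Proposition \ref{prop:inductionstep} does not transfer cleanly: that deformation is an argument inside $\cor k(\A^1_L,\gm)$ and computes a relation in the quotient $\HMW^{1,1}(L,\Z)$, not an identity among symbols inside the fixed group $\KMW_n(L(a))$, so it does not obviously lower the degree of the polynomials $p_i$ while staying on the $\KMW$ side of the comparison. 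Without the generation lemma the induction on $n$ does not get started for $k=n$, and without the $L'$-descent trick (absent from your proposal) there is no way to reduce to a situation where the weaker Morel generation applies. What you do get right is the treatment of the base cases $n\leq 1$ via $\theta$, the use of the projection formula for the case $k<n$, and the role of Example \ref{ex:inseparable} in the purely inseparable step --- these coincide with the paper's arguments where they overlap.
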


\begin{proof}
First, we know from Proposition \ref{prop:transe} that the diagram
\[
\xymatrix{\HMW^{n,n}(F,\Z)\ar[d]_-{\Tr_{F/L}}\ar[r] ^-{\theta_F }& \KMW_n(F)\ar[d]^-{\Tr_{F/L}} \\
\HMW^{n,n}(L,\Z)\ar[r]_-{\theta_L} & \KMW_n(L). }
\]
commutes. If $\Phi_F$ and $\Phi_L$ are isomorphisms, it follows from Corollary \ref{cor:splitinjective} that $\theta_F$ and $\theta_L$ are their inverses and thus that the diagram
\[
\xymatrix{\KMW_n(F)\ar[r]^-{\Phi_F}\ar[d]_-{\Tr_{F/L}} & \HMW^{n,n}(F,\Z)\ar[d]^-{\Tr_{F/L}} \\
\KMW_n(L)\ar[r]_-{\Phi_L} & \HMW^{n,n}(L,\Z). }
\]
also commutes. We may then suppose, using Corollary \ref{cor:degree1} that $n\geq 2$. Additionally, we may suppose that $[F:L]=p$ for some prime number $p$. Following \cite[Lemma 5.11]{Mazza06}, we first assume that $L$ has no field extensions of degree prime to $p$. In that case, it follows from \cite[Lemma 3.25]{Morel08} that $\KMW_n(F)$ is generated by elements of the form $\eta^{m}[a_1,a_2,\ldots,a_{n+m}]$ with $a_1\in F^\times$ and $a_i\in L^{\times}$ for $i\geq 2$. We conclude from the projection formula \ref{thm:projection}, its analogue in Milnor-Witt $K$-theory and the fact that $\Phi$ is a ring homomorphism that the result holds in that case.

Let's now go back to the general case, i.e. $[F:L]=p$ without further assumptions. Let $L^\prime$ be the maximal prime-to-$p$ field extension of $L$.  Let $\alpha\in \HMW^{n,n}(L,\Z)$ be such that its pull-back to $\HMW^{n,n}(L^\prime,\Z)$ vanishes. It follows then that there exists a finite field extension $E/L$ of degree $m$ prime to $p$ such that the pull-back of $\alpha$ to $\HMW^{n,n}(E,\Z)$ is trivial. Let $f:\spec E\to \spec L$ be the corresponding morphism. For any unit $b\in E^\times$, we have $\langle b\rangle\cdot f^*(\alpha)=0$ and it follows from the projection formula once again that $f_*(\langle b\rangle\cdot f^*(\alpha))=f_*(\langle b\rangle)\cdot \alpha=0$. We claim that there is a unit $b\in E^\times$ such that $f_*(\langle b\rangle)=m_{\epsilon}$. Indeed, we can consider the factorization $L\subset F^{sep}\subset E$ where $F^{sep}$ is the separable closure of $L$ in $E$ and the extension $F^{sep}\subset E$ is purely inseparable. If the claim holds for each extension, then it holds for $L\subset E$. We may thus suppose that the extension is either separable or purely inseparable. In the first case, the claim follows from \cite[Lemme 2]{Serre68} while the second case follows from \cite[Th\'eor\`eme 6.4.13]{Fasel08a}. Thus, for any $\alpha\in \HMW^{n,n}(L,\Z)$ vanishing in $\HMW^{n,n}(L^\prime,\Z)$ there exists $m$ prime to $l$ such that $m_{\epsilon}\alpha=0$.

Let now $\alpha\in \KMW_n(F)$ and  $t(\alpha)=(\Tr_{F/L}\circ \Phi_F-\Phi_L\circ \Tr_{F/L})(\alpha)\in \HMW^{n,n}(L,\Z)$. Pulling back to $L^\prime$ and using the previous case, we find that $m_{\epsilon}t(\alpha)=0$. On the other hand, the above arguments show that if the pull-back of $t(\alpha)$ to $F$ is trivial, then $p_{\epsilon}t(\alpha)=0$ and thus $t(\alpha)=0$ as $(p,m)=1$. Thus, we are reduced to show that $f^*(t(\alpha))=0$ where $f:\spec F\to \spec L$ is the morphism corresponding to $L\subset F$. 

Suppose first that $F/L$ is purely inseparable. In that case, we know from Example \ref{ex:inseparable} that $f^*f_*:\HMW^{n,n}(F,\Z)\to \HMW^{n,n}(F,\Z)$ is multiplication by $p_{\epsilon}$. The same property holds for Milnor-Witt $K$-theory. This is easily checked using the definition of the transfer, or alternatively using Proposition \ref{prop:transe}, the fact that $\theta_F$ is surjective and Example \ref{ex:inseparable}. Altogether, we see that $f^*t(\alpha)=p_{\epsilon}\Phi_F(\alpha)-\Phi_F(p_\epsilon\cdot \alpha)$ and therefore $f^*t(\alpha)=0$ since $\Phi_F$ is $\KMW_0(F)$-linear.

Suppose next that $F/L$ is separable. In that case, we have $F\otimes_L F=\prod_i F_i$ for field extensions $F_i/F$ of degree $\leq p-1$. We claim that the diagrams
\[
\xymatrix{\KMW_n(F)\ar[d]_-{\Tr_{F/L}}\ar[r] & \oplus_i \KMW_n(F_i)\ar[d]^-{\sum \Tr_{F_i/F}} \\
\KMW_n(L)\ar[r] & \KMW_n(F)}
\]
and 
\[
\xymatrix{\HMW^{n,n}(F,\Z)\ar[d]_-{\Tr_{F/L}}\ar[r] & \oplus_i \HMW^{n,n}(F_i,\Z)\ar[d]^-{\sum \Tr_{F_i/F}} \\
\HMW^{n,n}(L,\Z)\ar[r] & \HMW^{n,n}(F,\Z)}
\]
commute. The second one follows from Example \ref{ex:inseparable} and the first one from \cite[Lemma 9.4]{Neshitov14} (or alternatively from Proposition \ref{prop:transe}, the fact that $\theta_F$ is surjective and Example \ref{ex:inseparable}). By induction, each of the diagrams
\[
\xymatrix{\KMW_n(F_i)\ar[r]^-{\Phi_{F_i}}\ar[d]_-{\Tr_{F_i/F}} & \HMW^{n,n}(F_i,\Z)\ar[d]^-{\Tr_{F_i/F}} \\
\KMW_n(F)\ar[r]_-{\Phi_F} & \HMW^{n,n}(F,\Z). }
\]
commute, and it follows that $f^*(t(\alpha))=0$.
\end{proof}

\begin{thm}\label{thm:main}
The homomorphism
\[
\Phi_L:\KMW_n(L)\to \HMW^{n,n}(L,\Z)
\]
is an isomorphism for any $n\in\Z$ and any finitely generated field extension $L/k$. 
\end{thm}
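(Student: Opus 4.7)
The plan is to combine split injectivity of $\Phi_L$ (Corollary \ref{cor:splitinjective}), the transfer compatibility established in Theorem \ref{thm:respect}, and the identification of the degree-$1$-support piece $\HMW^{n,n}(E,\Z)^{(1)}$ with the image of $\Phi_E$ (the lemma preceding Proposition \ref{prop:inductionstep}). The case $n=1$ is already Corollary \ref{cor:degree1}, so the role of that corollary is played on the extension $E$ by the degree-$1$-support lemma.

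First I would dispose of the cases already known: for $n \leq 0$ the isomorphism was verified at the start of Section \ref{sec:main} (using $\HMW^{0,0}(L,\Z) = \KMW_0(L)$, $\HMW^{-p,-p}(L,\Z) = \W(L) = \KMW_{-p}(L)$, and $\KMW_0(L)$-linearity of $\Phi_L$), and the case $n = 1$ is Corollary \ref{cor:degree1}. So I may assume $n \geq 2$. By Corollary \ref{cor:splitinjective}, $\Phi_L$ is split injective with left inverse $\theta_L$, hence it suffices to prove surjectivity.

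Using the exhaustive filtration $\HMW^{n,n}(L,\Z) = \bigcup_{d \geq 1} \HMW^{n,n}(L,\Z)^{(d)}$, it is enough to show that each $\HMW^{n,n}(L,\Z)^{(d)}$ lies in the image of $\Phi_L$. A class in $\HMW^{n,n}(L,\Z)^{(d)}$ is represented by a correspondence $\beta \in \cor k(\spec L, \gm^{\times n})$ whose support is a disjoint union of $\spec E$ for finite field extensions $E/L$ of degree at most $d$. Decomposing component-wise, I may assume the support is a single $\spec E$; the datum of $\beta$ then consists of the $n$ units $a_1,\ldots,a_n \in E^\times$ coming from the projection to $\gm^{\times n}$ together with a class in $\KMW_0(E,\omega_{E/L})$. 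Untwisting the $\omega_{E/L}$-factor via the canonical orientation $\xi$ of \cite[Lemma 2.4]{Calmes14b}, exactly as in the proof of Proposition \ref{prop:inductionstep}, exhibits $\beta$ as $\Tr_{E/L}(\hat\beta)$ for a correspondence $\hat\beta \in \cor k(\spec E, \gm^{\times n})$ of degree one over $E$.

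The image $[\hat\beta] \in \HMW^{n,n}(E,\Z)$ then lies in $\HMW^{n,n}(E,\Z)^{(1)}$, and the lemma preceding Proposition \ref{prop:inductionstep}, whose proof is valid for any weight, identifies that subgroup with the image of $\Phi_E$. Picking $\sigma \in \KMW_n(E)$ with $\Phi_E(\sigma) = [\hat\beta]$, Theorem \ref{thm:respect} yields
\[
[\beta] = \Tr_{E/L}(\Phi_E(\sigma)) = \Phi_L(\Tr_{E/L}(\sigma)),
\]
so $[\beta]$ lies in the image of $\Phi_L$, completing the argument. The real technical obstacle is already overcome in Theorem \ref{thm:respect}: the reduction to prime-degree extensions and the computation of $f^\ast f_\ast$ on purely inseparable and separable factors via Example \ref{ex:inseparable} carry the load. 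Once those inputs are in hand, the reduction presented here is essentially formal.
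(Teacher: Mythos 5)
Your proposal is correct and follows essentially the same route as the paper: reduce to surjectivity via Corollary \ref{cor:splitinjective}, write a generating correspondence supported on $\spec F$ as $\Tr_{F/L}(\Phi_F(\beta))$ using the untwisting argument of Proposition \ref{prop:inductionstep} together with the identification of the degree-one piece with the image of $\Phi_F$, and conclude by the transfer compatibility of Theorem \ref{thm:respect}. Your write-up merely makes explicit the degree filtration and the component-wise decomposition that the paper's terse proof leaves implicit.
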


\begin{proof}
As in degree $1$, it suffices to prove that $\Phi_L$ is surjective. Let then $\alpha\in \cor k(L,\gm^n)$ be a finite Chow-Witt correspondence supported on $\spec F\subset (\A^1_L)^n$. Such a correspondence is determined by an $n$-uple $(a_1,\ldots,a_n)\in (F^\times)^n$ together with a bilinear form $\phi\in \GW(F,\omega_{F/L})$. Arguing as in Proposition \ref{prop:inductionstep}, we see that such a finite MW-correspondence is of the form $\Tr_{F/L}(\Phi_F(\beta))$ for some $\beta\in \KMW_n(F)$. The result now follows from Theorem \ref{thm:respect}.
\end{proof}


\bibliography{FGW}{}

\begin{thebibliography}{1}

\bibitem{Calmes14b}
Baptiste Calm\`es and Jean Fasel.
\newblock The category of finite {$MW$}-correspondences.
\newblock Available at \texttt{http://arxiv.org/abs/1412.2989}, 2014.

\bibitem{Deglise16}
F.~D{\'e}glise and J.~Fasel.
\newblock {$MW$}-motivic complexes.
\newblock In preparation, 2016.

\bibitem{Fasel08a}
J.~Fasel.
\newblock Groupes de {C}how-{W}itt.
\newblock {\em M\'em. Soc. Math. Fr. (N.S.)}, 113:viii+197, 2008.

\bibitem{Fasel16b}
J.~Fasel and P.~A. {\O}stv{\ae}r.
\newblock A cancellation theorem for {$MW$}-motives.
\newblock In preparation, 2016.

\bibitem{Kunz86}
E.~Kunz.
\newblock {\em K\"ahler differentials}.
\newblock Advanced {L}ectures in {M}athematics. Braunschweig/Wiesbaden,
  {F}rider. {V}ieweg \& {S}ohn edition, 1986.

\bibitem{Mazza06}
C.~Mazza, V.~Voevodsky, and C.~Weibel.
\newblock {\em Lecture notes on motivic cohomology}, volume~2 of {\em Clay
  Mathematics Monographs}.
\newblock American Mathematical Society, Providence, RI, 2006.

\bibitem{Morel08}
F.~Morel.
\newblock {\em $\mathbb {A}^1$-{A}lgebraic {T}opology over a {F}ield}, volume
  2052 of {\em Lecture Notes in Math.}
\newblock Springer, New York, 2012.

\bibitem{Neshitov14}
A.~Neshitov.
\newblock Framed correspondences and {M}ilnor-{W}itt {$K$}-theory.
\newblock arXiv:1410.7417. To appear in J. Inst. Math. Jussieu, 2014.

\bibitem{Serre68}
J.-P. Serre.
\newblock {\em Corps locaux}.
\newblock Publications de l'institut math\'ematique de l'universit\'e de
  Nancago. 1968.

\end{thebibliography}
\bibliographystyle{plain}


\end{document}